\newtheorem{theorem}{Theorem}[section]
\newtheorem{definition}[theorem]{Definition}
\newtheorem{lemma}[theorem]{Lemma}
\newtheorem{remark}[theorem]{Remark}
\newenvironment{proof}[1][Proof]{\noindent \emph{#1.} }{\hfill \
\rule{0.5em}{0.5em}}
\makeatletter\@addtoreset{equation}{section}\makeatother
\makeatletter\@addtoreset{figure}{section}\makeatother
\makeatletter\@addtoreset{table}{section}\makeatother
\begin{document}
 
 \title{Numerical study in stochastic homogenization for elliptic PDEs:
 convergence rate in the size of  representative  volume elements}

\author{Venera Khoromskaia \thanks{Max Planck Institute for
        Mathematics in the Sciences, Leipzig;
        Max Planck Institute for Dynamics of Complex Technical Systems, Magdeburg ({\tt vekh@mis.mpg.de}).}
        \and
        Boris N. Khoromskij \thanks{Max-Planck-Institute for
        Mathematics in the Sciences, Inselstr.~22-26, D-04103 Leipzig,
        Germany ({\tt bokh@mis.mpg.de}).}
        \and
        Felix Otto \thanks{Max-Planck-Institute for
        Mathematics in the Sciences, Inselstr.~22-26, D-04103 Leipzig,
        Germany ({\tt otto@mis.mpg.de}).}
        }

 \date{}

\maketitle

\begin{abstract}

We describe the numerical scheme for the discretization and solution of 2D elliptic equations
with strongly varying piecewise constant coefficients arising in the stochastic homogenization
of multiscale composite materials.
An efficient stiffness matrix generation scheme based on assembling the local Kronecker
product matrices is introduced.
The resulting large linear systems of equations are solved by the preconditioned 
CG iteration  with a convergence rate that is independent of the grid size and 
the variation in jumping coefficients (contrast).  
Using this solver we numerically investigate the convergence of the 
Representative Volume Element (RVE) method in stochastic homogenization that 
extracts the effective behavior of the random coefficient field.
Our numerical experiments confirm the asymptotic convergence rate  
of systematic error and standard deviation in the size of RVE rigorously established 
in \cite{GlNeuOtto:13}. The asymptotic behavior of covariances of the homogenized matrix 
in the form of a quartic tensor is also studied numerically.
Our approach allows laptop computation of sufficiently large number 
of stochastic realizations even for large sizes of the RVE.  
\end{abstract}

\noindent\emph{Key words:}
Stochastic homogenization, Representative Volume Element,
elliptic problem solver, PCG iteration, homogenized matrix, empirical variance, 
Kronecker product, covariance of homogenization matrix. 

\noindent\emph{AMS Subject Classification:} 65F30, 65F50, 65N35, 65F10

\section{Introduction}\label{Int:SH} 

Homogenization methods allow to derive the effective mechanical and physical
properties of highly heterogeneous  materials from the knowledge of the spatial distribution of their
components \cite{BrisLegoll:11,KaFoGa:03}.   In particular,  
stochastic homogenization via the Representative Volume Element (RVE) methods  
provide means for calculating the effective large-scale characteristics related to
structural and geometric properties of random composites,
by utilizing a possibly large number of probabilistic 
realizations \cite{GlOtto:16,GlOtto:15,GlOtto:12,GlNeuOtto:13,Fischer:18}.  
 The numerical investigation of the effective characteristics of random structures 
is a challenging problem since the underlying elliptic equation with randomly generated 
coefficients should be solved for many 
thousands realizations and for domains with substantial structural complexity to obtain 
sufficient statistics. Note that for every realization of the random medium
one should construct a new  stiffness matrix and right hand side to solve the discretized problem.
Therefore, construction of  fast solvers 
(which allow to confirm numerically the quantitative results in the 
stochastic homogenization) using the conventional computing facilities is a challenge.

This paper  presents the   numerical study of 
the stochastic homogenization  of an elliptic system with randomly generated coefficients.
Our approach is based on the FEM-Galerkin approximation of the 2D elliptic equations 
in a periodic setting by using fast assembling of the FEM stiffness matrix 
in a sparse matrix format, which is performed by agglomerating the Kronecker tensor products of 
simple 1D FEM discrete operators   \cite{KKO:17}.  
We use the product piecewise linear finite elements on the rectangular grid assuming that  
 strongly varying piecewise constant equation coefficients are resolved on that grid. 
This scheme provides efficient approximation of equations with  complicated   
 jumping coefficients.  The numerical analysis of the error in the Galerkin FEM approximation 
  indicates the convergence
rate $O(h^{\beta})$ in the $L^2$-norm with $3/2 \leq \beta \leq 2$.

The resulting large linear system of equations is solved by the preconditioned  
CG iteration, where the convergence rate is proven to be independent on the grid size and  
the relative variation in jumping coefficients, i.e., on the contrast.
The preconditioned iterative solvers for the discrete  elliptic systems of equations 
with  variable coefficients have been long since 
in the focus in the literature on numerical methods for multi-dimensional and stochastic PDEs,  
see \cite{KhWi:B,KhorBook:17,BoSchw:11} and the literature therein.

 In this paper, we consider an ensemble of two-valued random coefficient fields, 
  which is based on independently and uniformly placed (and thus overlapping) 
  axi-parallel square inclusions of fixed side length.
We investigate the RVE method that
(approximately) extracts the effective (i.e. large-scale) behavior
of the medium in form of the deterministic and homogeneous matrix $\mathbb{A}_{hom}$ 
from a given (stationary and ergodic) ensemble.
This method produces an approximation to $\mathbb{A}_{hom}$ by solving $D=2$ elliptic 
equations on a torus of (lateral) size $L$ with a specific right hand side (the 
corrector equation), by taking the spatial average of the flux of these solutions, 
and by taking the empirical mean over $N$ independent realizations of this coefficient 
field under the naturally periodized version of the ensemble.
This is an approximation in so far as the outcome is still random (as quantified by 
the standard deviation of the outcome of a
single realization) and that the periodic boundary conditions affect the statistics 
(which we call the systematic error).
In \cite{GlNeuOtto:13}, Gloria, Neukamm and the last author rigorously derived upper bounds how the 
standard deviation and the systematic error decrease with increasing RVE size $L$.
Our numerical experiments confirm the scaling of these bounds.
Since numerically, there is no access to exact values of the variance (or standard expectation) 
or the expectation, we replace these quantities by their empirical counterparts 
for a large number of realizations $N$.
We thus first provide numerical evidence that these quantities have saturated in $N$, 
and second that their limiting values display the predicted scaling in $L$. 

In work \cite{GlOtto_Fluct:17} by Duerinckx, Gloria and the last author, it was worked out that 
the properly rescaled variance of the output of the RVE converges as $L\uparrow\infty$ 
to a quartic tensor $Q$ that governs the leading-order fluctuations of any solution. 
In this paper, we show how the symmetry properties of the ensemble yields symmetry 
properties of $Q$ (and its approximation). Also, a convergence rate was rigorously 
established in that work, and is being numerically investigated here. While the 
number of samples $N$ considered here seems too low to reach saturation 
in the empirical variance, the findings are at least not in contradiction with 
the theoretic results.  
 
In numerical tests on the stochastic properties of the 2D RVE method  
we study the asymptotics of empirical variance versus the size of RVE $L\leq 128$,
and  of the systematic error versus the number of realizations $N$ up to $N=10^{5}$.
Furthermore, we estimate the convergence of the quartic  tensor by 
implementing a large number of stochastic realizations.
The proposed  techniques allow to compute  a sufficiently large number 
of  realizations of random coefficient fields with  
a large  number of overlapping inclusions up to $L^2=128^2$   
corresponding to the stiffness matrix size $513^2\times 513^2$ 
using  MATLAB on a moderate computer cluster.

The numerical investigation of the stochastic homogenization problem attracts interest
and becomes an active field of research, see the survey \cite{BrisLegoll:11} and references
therein. Recently the numerical solution of the corrector-type problem, in the context of 
homogenization of the diffusion equation with spherical inclusions 
 by using boundary element methods  and 
 the fast multipole techniques has been considered in \cite{CaEhrLeStXi:18}.

The rest of the paper is organized as follows. In Section \ref{Int2:SH}, we address the 
problem setting and define the elliptic equations of stochastic homogenization. 
Section \ref{sec:Matr_Solv} describes the Galerkin-FEM discretization scheme based on the fast 
matrix generation by using sums of Kronecker products of single-dimensional 
matrices. We also outline the PCG iteration  applied in the computer simulations
and provide numerics on the FEM discretization error. 
Section \ref{sec:Comp_Ahom} introduces the 
computational scheme for the stochastic average coefficient matrix.
Furthermore, in Section \ref{ssec:Symmetr_Quartic_tensQ} we describe the construction 
and properties of the covariances of the homogenized matrix in the form of a quartics tensor.
Section \ref{sec:Homo_Numer} presents results of numerical experiments 
on the  empirical average and systematic error at the limit of a large 
number of stochastic realizations. 
The asymptotic of the  quartic tensor versus the leading order variances 
is analyzed numerically in Section \ref{ssec:Quartic_tensQ}.
Conclusions outline the main results of the paper. 
In Appendix the spectral properties of the discrete stochastic elliptic operators  
  are studied numerically for a sequence of stochastic realizations.

\section{Elliptic equations in stochastic homogenization }\label{Int2:SH}

In this section, we describe the problem setting in the stochastic homogenization theory. 
For given $f\in {\cal L}^2(\Omega)$ such that $\int_{\Omega} f(x) dx =0$,
we consider the class of model elliptic boundary value problems on the $d$-dimensional 
torus $\Omega :=[0,L)^d$ of side-length $L\in \mathbb{N}$: find $\phi\in H^1(\Omega)$, s.t.
\begin{equation} \label{eqn:hm_setting}
  {\cal A}\phi := -\nabla \cdot \mathbb{A}(x)\nabla \phi =f(x), \quad 
x=(x_1,\ldots,x_d)\in \Omega,
\end{equation}
with the diagonal $d\times d$ uniformly elliptic coefficient matrix $\mathbb{A}(x)$, 
$\infty > \beta_0 I \geq\mathbb{A}(x)\geq\alpha_0 I>0$. 
In this paper we focus on the special class of
elliptic problems (\ref{eqn:hm_setting}) arising in stochastic homogenization theory where 
the highly varying coefficient matrix and the right-hand side are defined 
by a sequence of stochastic realizations
as described in \cite{GlOtto:16,GlOtto:15,GlOtto:12,GlNeuOtto:13,Fischer:18}, 
see details in Sections  \ref{sec:Matr_Solv} and \ref{sec:Comp_Ahom}.

In what follows, we present the numerical analysis for 2D stochastic 
homogenization problems (\ref{eqn:hm_setting})
with periodic boundary conditions on $\Gamma = \partial \Omega $, in the form
\[
 \phi(0,x_2)=\phi(L,x_2), \quad 
 \frac{\partial }{\partial x_1}\phi(0,x_2)=\frac{\partial }{\partial x_1}\phi(L,x_2),
 \quad x_2\in [0,L),
\]
\[
 \phi(x_1,0)=\phi(x_1,L), \quad 
 \frac{\partial }{\partial x_2}\phi(x_1,0)=\frac{\partial }{\partial x_2}\phi(x_1,L),
 \quad x_1\in [0,L).
\]
The diagonal $2\times 2$ coefficient matrix $\mathbb{A}(x)$ is defined by
 \[
  \mathbb{A}(x)=\begin{pmatrix}
   a(x) & 0 \\
   0 & a(x)\\
    \end{pmatrix}, \quad    \quad x\in\Omega,
 \]
where the scalar function $a(x)>0$ is piecewise constant in $\Omega$.
The efficient numerical simulation pre-supposes  the fast numerical solution of the equation (\ref{eqn:hm_setting})
in the case of many different coefficients $\mathbb{A}(x)$ and right-hand sides, 
generated by certain stochastic procedure,
and in the calculation of various functionals on the sequence of solutions $\phi$. 
In this problem setting the bottleneck task is fast and accurate generation of the FEM stiffness 
matrix in the sparse matrix form, which should be re-calculated many hundred if not thousand times
in the course of stochastic realizations.


In asymptotic analysis of stochastic homogenization problems the coefficient and
the right-hand side are chosen in a specific way,  see \cite{GlOtto:12,GlOtto:16} 
for the particular problem setting.
In this paper,  we describe the conventional 2D FEM discretization scheme 
in the rescaled domain  $\Omega=[0,1)^2$. 
Given the size of representative volume elements $L=1,2,3,\ldots$  
we generate (possibly overlapping) decomposition of the domain  $\Omega$ into 
$L^2$ equal unit cells $G_s$, $s=1,\ldots , L^2$, each of size 
$\frac{2\alpha}{L}\times \frac{2\alpha}{L}$,
whose centers are distributed randomly  (by the Poisson distribution) within the supercell $ \Omega$,
taking into account periodicity in both spacial variables $x_1$ and $x_2$.
Here the overlap factor satisfies $\alpha\leq 1/2$.
Stochastic characteristics of the system 
can be estimated  at the limit of a large number $L$. 

We consider a sequence of random coefficient distributions
$\{G_s\}_n$, numbered by $n=1,\ldots , N$, where the particular 
set $\{G_s\}=\{G_s\}_n$ for fixed $n$  will be called a realization. 
For any fixed realization define the covered domain
\begin{equation}\label{eqn:Stoch-decomp}
  \widehat{G}=\widehat{G}_n := \bigcup^{L^2}_{s=1} G_s,
\end{equation}
and the respective coefficient 
\begin{equation}\label{ahm_coef}
 \widehat{a}(x)=\widehat{a}^{(n)}(x) 
 = \left\{ \begin{array}{ll}
         1 & \mbox{if } x \in \widehat{G}_n ,  \\
        0 & \mbox{otherwise} .
        \end{array} \right. 
\end{equation}
The stochastic model is specified by the choice of the overlap constant $\alpha\in (0,1/2]$ and
the scaling factor $\lambda \in (0,1]$.
In the following, the constant $\lambda$ will be
 fixed in the interval $0.1 \leq \lambda \leq 0.8$. 
We denote the ``stochastic'' elliptic operator for the particular realization by
 ${\cal A}^{(n)}= {\cal A}_{\lambda}^{(n)}$ or just ${\cal A}$  (if $n$ is fixed) so that
 \[
  {\cal A}^{(n)} = - \nabla \cdot \mathbb{A}^{(n)}(x) \nabla,
 \]
where  the corresponding  $2\times 2$ coefficient matrix $\mathbb{A}^{(n)}(x)$ is defined by
 \begin{equation}\label{eqn:Am_coef}
  \mathbb{A}^{(n)}(x)= 
  \lambda  \begin{pmatrix}
   1 & 0 \\
   0 & 1\\
   \end{pmatrix} + 
  (1-\lambda)\begin{pmatrix}
   \widehat{a}^{(n)}(x) & 0 \\
   0 & \widehat{a}^{(n)}(x)\\
    \end{pmatrix} := 
     \begin{pmatrix}
   {a}^{(n)}(x) & 0 \\
   0 & {a}^{(n)}(x)\\
   \end{pmatrix},     \quad x\in\Omega,
 \end{equation}
 and the diagonal matrix coefficient takes the form 
 \begin{equation}\label{eqn:coef_diag} 
 {a}^{(n)}(x)=\lambda + (1-\lambda)\widehat{a}^{(n)}(x).
\end{equation}
We use the notation $\widehat{\mathbb{A}}^{(n)}(x)$ for the 
 "stochastic part" of a matrix associated with  the diagonal coefficient $\widehat{a}^{(n)}(x)$, i.e.
 \[
   \widehat{\mathbb{A}}^{(n)}(x)=
   \begin{pmatrix}
   \widehat{a}^{(n)}(x) & 0 \\
   0 & \widehat{a}^{(n)}(x)\\
    \end{pmatrix}.
 \]

Now the elliptic equations in stochastic homogenization are formulated as follows.
Fixed coefficient $\widehat{\mathbb{A}}^{(n)}(x)$, for $i=1,2$ solve the periodic 
 elliptic problems in $\Omega$,
 \begin{equation}\label{eqn:RHS_grad1}
 -\lambda \Delta\phi_i -(1-\lambda)\nabla \cdot \widehat{\mathbb{A}}^{(n)}
 ({\bf e}_i +\nabla \phi_i )=0,
 \end{equation}
 where the directional unit vectors ${\bf e}_i$, $i=1,2$, are given by ${\bf e}_1=(1,0)^T$ 
 and  ${\bf e}_2=(0,1)^T$, see Section \ref{sec:Comp_Ahom} for more details.
 The variational form of the equation (\ref{eqn:RHS_grad1}) reads as follows
 \begin{equation}\label{eqn:Ellipt_variat}
\int_{\Omega}(\lambda \nabla \phi_i  \cdot \nabla \psi + 
(1-\lambda) \widehat{a}^{(n)}(x)[{\bf e}_i + \nabla \phi_i]\cdot \nabla\psi )dx =0 
\quad \forall \; \psi \in H^1(\Omega).
\end{equation}
  The equation (\ref{eqn:RHS_grad1})  can be also written in the classical form (\ref{eqn:hm_setting})
 \begin{equation}\label{eqn:scaled_setting}
 {\cal A}^{(n)} \phi_i =  f_i, \quad \mbox{with} \quad f_i=
 (1-\lambda)\nabla \cdot \widehat{a}^{(n)}\, {\bf e}_i. 
 \end{equation}

 Fig. \ref{fig:Coeff} illustrates an example of the particular realization of stochastic coefficient $a^{(n)}(x)$ 
in the case $L=8$, $\lambda=0.1$ and $\alpha =1/4$ visualized on $m_1\times m_1$ grid with $m_1=97$.
 \begin{figure}[htbp]
\centering
\includegraphics[width=7.6cm]{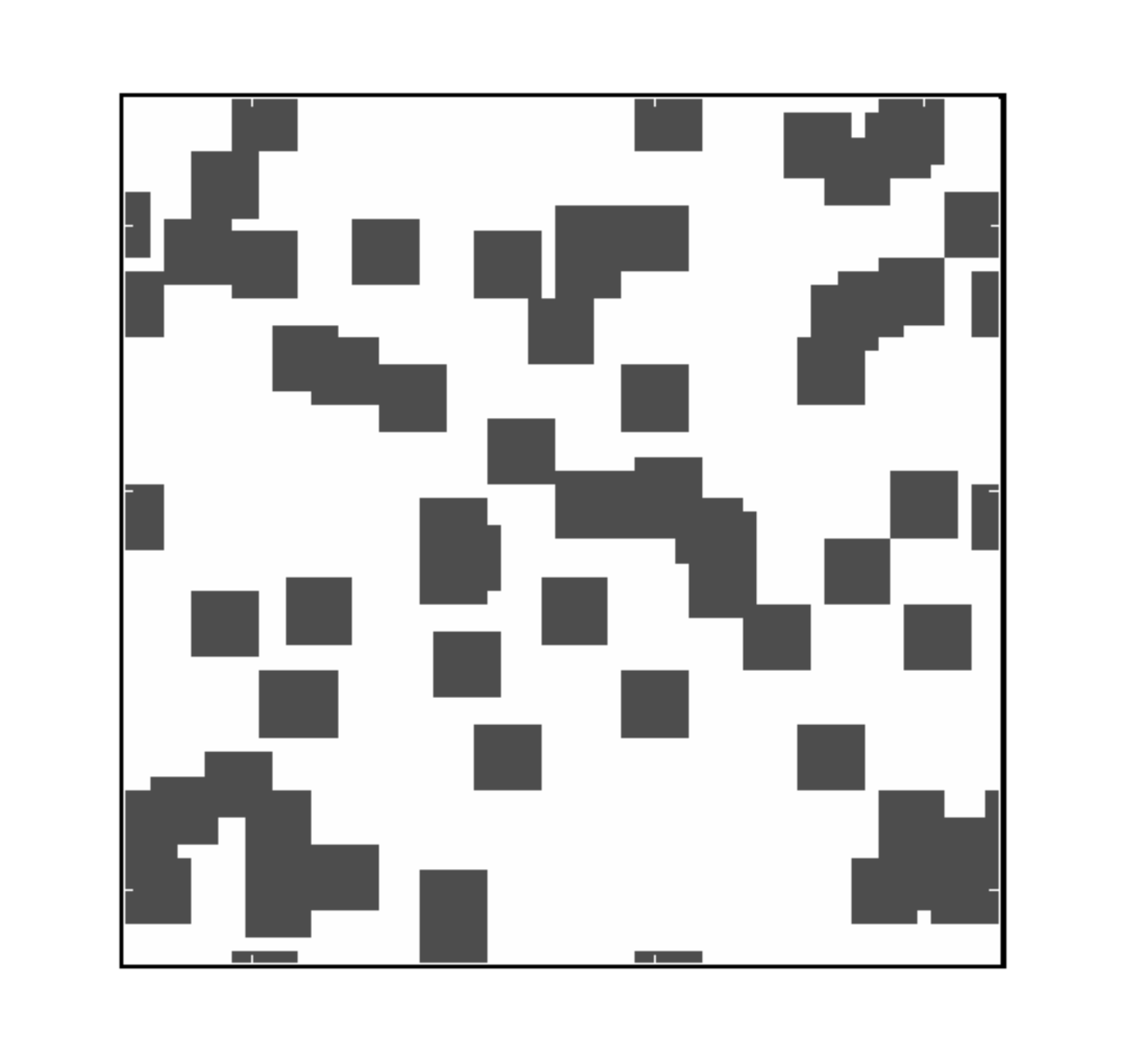}
\includegraphics[width=8.4cm]{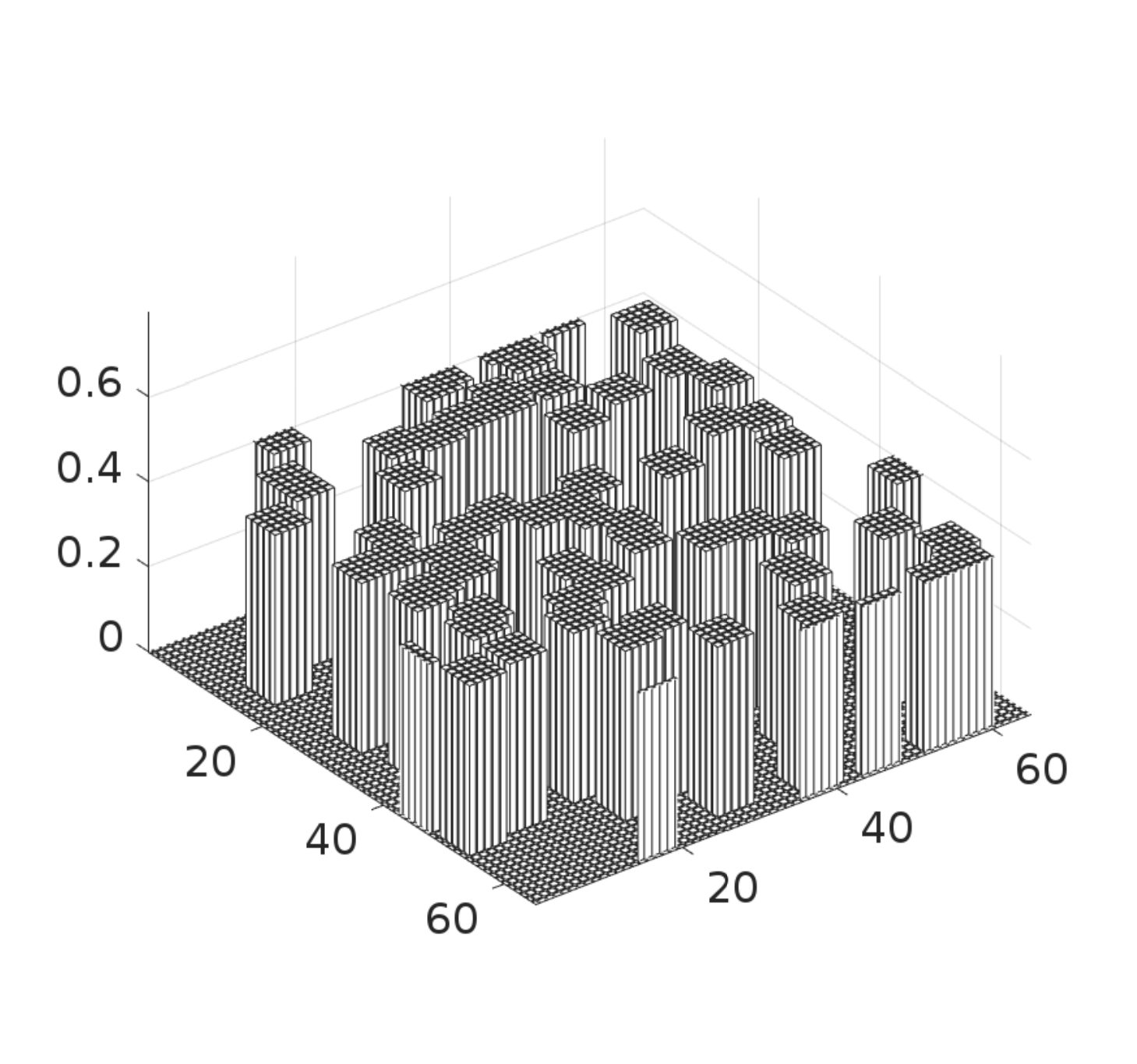} 
\caption{\small A realization of a stochastic process with $L^2$ overlapping cells
for $L=8$, $\alpha =1/4$.}
\label{fig:Coeff}
\end{figure}

The problem setting remains verbatim in the $d$-dimensional case, $d>2$. In this case the 
equation (\ref{eqn:scaled_setting}) takes the same form, where a $d\times d$ coefficient matrix 
is given by
\[
 \mathbb{A}^{(n)}(x)= \mbox{diag}\{a^{(n)}(x),\ldots,a^{(n)}(x) \}, \quad x \in \Omega
\]
and  ${\bf e}_i\in \mathbb{R}^d$, $i=1,\ldots, d$, represent the set of directional 
unit vectors in $\mathbb{R}^d$.

\section{Matrix generation and iterative solution}\label{sec:Matr_Solv} 

In this section we describe the FEM discretization scheme and the fast matrix generation 
approach based on the use of tensor Kronecker products of ``univariate''   matrices. 

\subsection{Galerkin FEM discretization}\label{ssec:FEM_FDM}

 First, we introduce the uniform $m_s\times m_s$ rectangular grid $\Omega_{h_s}$ in $\Omega$
 with the grid size $h_s=\frac{1}{m_s -1}$, such that $m_s = m_0 L +1$, $m_0= 2^{p_0}$, i.e.
 $h_s = \frac{1}{m_0 L}$. We assume that the unit cell $G_s$, $s=1,\ldots L^2$, of size 
 $\frac{2\alpha}{L} \times \frac{2\alpha}{L}$ adjust the square grid $\Omega_{h_s}$, such that
 the center $c_s$ of $G_s$ belongs to the set of grid points in $\Omega_s$,
 while the overlap factor $\alpha $ may take values 
 $\alpha \in \{\frac{1}{m_0}, \frac{2}{m_0},\ldots \frac{2^{p_0 -1}}{m_0}\}$. 
 In this construction the univariate size of the unit cell varies as
 \[
  \frac{2\alpha}{L} = \frac{2\alpha m_0}{m_0 L} = k h_s, \quad \mbox{with } \quad
  k=2,\; 4,\; \ldots m_0.
 \]
 In the following numerical examples we normally use the overlap constant $\alpha =1/4$.
 For $\alpha =1/2$, the maximal size of the unit cell is given by $\frac{1}{L}\times \frac{1}{L}$, which  
contains $m_0+1$ grid points in each spacial direction leading to $m_1\times m_1$ rectangular
grid with $m_1=m_0 L +1$.

The FEM discretization of the elliptic PDE in (\ref{eqn:scaled_setting}) can be constructed, 
in general, on the finer grid $\Omega_h$ compared with $\Omega_s$, which serves for the resolution of 
jumping coefficients. 
To that end, we introduce the $m_1\times m_1$ rectangular
grid $\Omega_h$ with the mesh size $h=\frac{1}{m_1-1} $, $m_1 \geq m_s$, 
that is obtained by a dyadic refinement
of the grid $\Omega_s$, such that the relation 
\begin{equation}\label{eqn:Grid_relation}
 m_1-1=(m_s -1) 2^{p}, \quad \mbox{with} \quad p=0,1,2,\ldots
\end{equation}
holds, implying $h_s = 2^p h$. Now the grid-size of the unit cell $G_s$ 
on the finer grid $\Omega_h$ is given by $(m_0 2^p +1)\times (m_0 2^p +1)$.

Given a finite dimensional space $X\subset H^1(\Omega)$ of tensor product piecewise linear finite elements 
$X = \mbox{span}\{ \psi_\mu(x) \}$ associated with the grid $\Omega_h$, with  $\mu=1,...,M_d $, $M_d=m_1^d$,
for $d=2$ incorporating periodic boundary conditions,  we are looking for the traditional 
FEM Galerkin approximation of the exact solution in the form 
$$
\phi(x) \approx \phi_X(x)=\sum_{\mu=1}^{M_d} u_\mu \psi_\mu(x) \in X,
$$
where ${\bf u}=(u_1,\ldots,u_{M_d})^T\in  \mathbb{R}^{M_d}$ is the unknown coefficients vector.
Fixed realization of the coefficient $a^{(n)}(x)$, for $i=1,2$
we define the Galerkin-FEM discretization with respect to $X$ of the variational equation 
(\ref{eqn:Ellipt_variat})  by 
\begin{equation}\label{eqn:FEM_Galerk}
 A {\bf u}_i = {\bf f}_i, \quad A = [a_{\mu \nu}]\in \mathbb{R}^{M_d\times M_d},\quad 
 {\bf f}_i = {\bf f}=[f_\mu] \in  \mathbb{R}^{M_d}, 
\end{equation}
where the Galerkin-FEM matrix $A$ generated by the equation coefficient 
 ${\mathbb{A}}^{(n)}(x)$ is calculated by using the associated bilinear form
\begin{equation}\label{eqn:FEM_discr}
a_{\mu \nu} = \langle {\cal A} \psi_\mu,  \psi_\nu \rangle = 
\int_{\Omega}(\lambda \nabla \psi_\mu  \cdot \nabla \psi_\nu + 
(1-\lambda) a^{(n)}(x) \nabla \psi_\mu \cdot \nabla \psi_\nu)dx, 
\end{equation}
and 
\begin{equation}\label{eqn:FEM_discr_RHS}
 f_\mu = \langle f, \psi_\mu \rangle = 
 \int_\Omega (1-\lambda)\nabla \cdot \widehat{a}^{(n)}(x)\, {\bf e}_i \psi_\mu \, dx =
 - (1-\lambda) \int_\Omega \widehat{a}^{(n)}(x) \frac{\partial \psi_\mu}{\partial x_i} dx.
\end{equation}
Corresponding to (\ref{eqn:scaled_setting}) and (\ref{eqn:FEM_discr}), 
 we represent the stiffness matrix $A$ in the additive form
\begin{equation}\label{eqn:FEM_matrix}
A=\lambda A_\Delta + (1-\lambda) \widehat{A}_s,
\end{equation}
where $A_\Delta$ represents the $M_d \times M_d$
FEM Laplacian matrix in periodic setting that has the standard two-terms Kronecker product form. 
Here matrix $\widehat{A}_s$ provides the FEM approximation to the "stochastic part" 
in the elliptic operator corresponding to the coefficient $\widehat{a}^{(n)}(x)$, 
see (\ref{eqn:Am_coef}). The latter is determined by the sequence of
random coefficients distribution in the course of stochastic realizations, 
numbered by $n=1,\ldots,N$.

In the case of complicated jumping coefficients the stiffness matrix generation in the elliptic FEM 
usually constitutes the dominating part of the overall solution cost.
In the course of stochastic realizations the equation (\ref{eqn:FEM_Galerk})
is to be solved many hundred or even thousand times, so that every time 
one has to update the stiffness matrix $A$ and the right-hand side $\bf f$.

Our discretization scheme  computes  all matrix entries at the low cost
by assembling of the local Kronecker product matrices obtained by representation of
$\widehat{a}^{(n)}(x)$ as a sum of separable functions.
This allows to store the resultant stiffness matrix in the sparse matrix format. 
Such a construction only includes the 
pre-computing of tri-diagonal matrices representing 1D elliptic operators 
with jumping coefficients in periodic setting.
In the next sections, we shall describe the efficient construction 
of the "stochastic" term $A_s$.

\subsection{Matrix generation by using Kronecker product sums}\label{ssec:FDM_Kron} 


To enhance the time consuming matrix assembling process 
we apply the FEM Galerkin discretization (\ref{eqn:FEM_discr}) of 
equation (\ref{eqn:scaled_setting}) by means of the 
tensor-product piecewise linear finite elements 
$$
\{\psi_{\boldsymbol{\mu}}(x):=\psi_{\mu_1}(x_1) \cdots \psi_{\mu_d}(x_d)\}, 
\quad {\boldsymbol{\mu}}=(\mu_1,\ldots,\mu_d), 
\quad \mu_\ell\in {\mathcal I}_\ell=\{1,\ldots,m_\ell\}, \; \ell=1,\ldots,d,
$$
where $\psi_{\mu_\ell}(x_\ell)$ are the univariate piecewise linear hat 
functions\footnote{Notice that the univariate grid size $m_\ell$ is of the order of $m_\ell=O(1/\epsilon)$,
where the small homogenization parameter is given by $\epsilon\approx 1/(m_0 L)$,
designating the total problem size 
$
M_d = m_1m_2\cdots m_d=O(1/\epsilon^d).
$
}.
The $M_d\times M_d$ stiffness matrix is constructed by the standard mapping of the multi-index 
$ \boldsymbol{\mu}$
into the long univariate index $1\leq \mu \leq M_d$ for the active degrees of freedom in periodic setting. 
For instance,  we use the so-called big-endian convention for $d=3$ and $d=2$
\[
 {\boldsymbol{\mu}}\mapsto \mu:= \mu_3 + (\mu_2-1)m_3 + (\mu_1-1)m_2 m_3, 
 \quad {\boldsymbol{\mu}}\mapsto \mu:= \mu_2 + (\mu_1-1)m_2,
\]
respectively. 
In what follows,  we consider the case $d=2$ in more detail. 

In our discretization scheme  we calculate the stiffness matrix  
by assembling of the local Kronecker product terms by using representation of the ``stochastic part''
in the coefficient $\widehat{a}^{(n)}(x)$ as an $R$-term sum of separable functions.
To that end, let us assume for the moment that the scalar diffusion coefficient $a(x_1,x_2) $ can be 
represented in the separate form (rank-$1$ representation)
\[
 a(x_1,x_2) = a^{(1)} (x_1) a^{(2)} (x_2).
\]
Then the entries of the Galerkin 
stiffness matrix $A=[a_{{\mu}{\nu}}]\in \mathbb{R}^{M_d\times M_d}$ can be represented by
\begin{align*}\label{eqn:tensStMatr}
a_{{\mu}{\nu}}   &=   \langle {\mathcal A} \psi_{\mu}, \psi_{\nu} \rangle =
\int_\Omega  a^{(1)} (x_1) a^{(2)} (x_2) 
\nabla \psi_\mu \cdot \nabla \psi_\nu  d x \\
    & = {\int_{(0,1)} } a^{(1)} (x_1) 
    \dfrac{\partial \psi_{\mu_1} (x_1)}{\partial x_1 }   
   \dfrac{\partial \psi_{\nu_1} (x_1)}{\partial x_1 } d x_1   
   \int_{(0,1)} a^{(2)} (x_2) \psi_{\mu_2}(x_2)\psi_{\nu_2}(x_2) d x_2  \\
   & +  \int_{(0,1)} a^{(1)} (x_1) \psi_{\mu_1}(x_1) \psi_{\nu_1} (x_1)d x_1  
   \int_{(0,1)} a^{(2)} (x_2) \dfrac{\partial \psi_{\mu_2} (x_2)}{\partial x_2 } 
   \dfrac{\partial \psi_{\nu_2} (x_2)}{\partial x_2 } d x_2 , 
  \end{align*} 
which leads to the rank-2 Kronecker product representation 
\[
 {A} = {A}_1 \otimes S_2 + S_1 \otimes {A}_2,
\]
where $\otimes$ denotes the conventional Kronecker product of matrices, 
see Definition \ref{def:Kron} below.
Here ${A}_1=[a_{\mu_1\nu_1}]\in \mathbb{R}^{m_1\times m_1} $ and 
${A}_2=[a_{\mu_2 \nu_2}]\in \mathbb{R}^{m_2\times m_2}$ denote the univariate stiffness matrices
and $S_1=[s_{\mu_1\nu_1}]\in \mathbb{R}^{m_1\times m_1}$ and 
$S_2=[s_{\mu_2 \nu_2}]\in \mathbb{R}^{m_2\times m_2}$ define the weighted mass matrices, 
for example
\[
 a_{\mu_1\nu_1}= 
 {\int_{(0,1)} } a^{(1)} (x_1) 
    \frac{\partial \psi_{\mu_1} (x_1)}{\partial x_1 }   
   \frac{\partial \psi_{\nu_1} (x_1)}{\partial x_1 } d x_1 , \quad
  s_{\mu_1\nu_1}= 
  \int_{(0,1)} a^{(1)} (x_1) \psi_{\mu_1}(x_1) \psi_{\nu_1} (x_1)d x_1 .
\]
\begin{definition} \label{def:Kron}
Recall that given $p_1\times q_1$ matrix $A$ and 
$p_2\times q_2$ matrix $B$, their Kronecker product is defined as a $p_1 p_2 \times q_1 q_2$ 
matrix $C$ via the block representation 
$$ 
C=A \otimes B =[a_{ij}B], \quad i=1,\ldots,p_1, \; j=1,\ldots,q_1.
$$
\end{definition}

Let us discuss in more detail the calculation of the 1D stiffness matrices $A_1$ and $A_2$
in the case of variable 1D coefficients.
We choose  the Galerkin FEM with $m=m_1$ piecewise-linear hat 
functions $\left\{\psi_{\mu_1} \right\}$ in periodic setting in $\Omega =[0,1)$, 
constructed on a uniform grid with a step size $h=1/m$, 
and nodes $x_{\mu_1}=h \,{\mu_1}$, ${\mu_1} = 1,\ldots,m$.  
If we denote the diffusion coefficient by $a(x_1)$, then
the entries of the exact stiffness matrix ${A}_1$  read as
\begin{equation*}\label{eqn:GalekMatr}
\left(a \right)_{\mu_1,\mu_1'} = \langle a(x)\nabla\psi_{\mu_1}(x), 
\nabla\psi_{\mu_1'}(x) \rangle_{L_2(D)}, \quad {\mu_1},{\mu_1'}=1,\ldots,m.
\end{equation*}
We assume that the coefficient remains constant at each
spatial interval $[x_{\mu_1-1}, x_{\mu_1}]$, which corresponds to the evaluation of the scalar product
above via the midpoint quadrature rule yielding the approximation order $O({h^2})$.

Introducing the coefficient vector
${\bf a}=[a_{\mu_1}]\in \mathbb{R}^m$, $a_{\mu_1} = a(x_{\mu_1-1/2})$, $\mu_1=1,\ldots,m$, 
where $x_{i_1-1/2}$ is the middle point of the integration interval,
the symmetric tridiagonal matrix of interest can be represented by
\begin{equation}
{A}_1 = \dfrac{1}{h}\left[
\begin{matrix}
a_1+a_2 & -a_2 & & & -a_1\\
-a_2 & a_2+a_3 & -a_3 \\
& \ddots & \ddots & \ddots \\
& & -a_{m-1} & a_{m-1}+a_{m} & -a_{m} \\
-a_1& & & -a_{m} & a_{m} +a_1
\end{matrix}\right].
\label{eqn:stiff_matrix_1point}
\end{equation}

By simple algebraic transformations (e.g. by lamping of the mass matrices)
the matrix ${A}$ can be simplified to the form (without loss of approximation order)  
\begin{equation} \label{eqn:Lapl_Kron_D}
 {A} \mapsto A = A_1 \otimes D_2 + D_1 \otimes A_2,
\end{equation}
 where $D_1, D_2$ are the diagonal matrices with positive entries.
 This representation applies in particular to the periodic Laplacian.
 
 In the general case, the piecewise constant stochastic coefficient can be represented 
 as an $R$-term sum of separable coefficients.  This leads to the linear system of equations
\begin{equation} \label{eqn:FEM_syst}
 A {\bf u} = {\bf f},
\end{equation}
constructed for the general $R$-term separable coefficient $a(x_1,x_2)$ with $R>1$.

The representation in (\ref{eqn:Lapl_Kron_D}) can be further simplified to the 
anisotropic Laplacian type matrix
\begin{equation*} \label{eqn:Lapl_Kron}
 {A} \mapsto B = \alpha_2 A_1 \otimes I_2 + \alpha_1 I_1 \otimes A_2,
\end{equation*}
which will be used as a prototype preconditioner for solving
the target linear system (\ref{eqn:FEM_syst}).

Taking into account the rectangular structure of the grid, 
we use the simple finite-difference (FD) scheme for the matrix
representation of the Laplacian operator $-\Delta$.
In this case the scaled discrete Laplacian incorporating periodic boundary conditions takes the form
\begin{equation}\label{eqn:Lap_Kron}
 A_\Delta  = \Delta_1 \otimes I_{m_2} + I_{m_1}\otimes \Delta_2,
\end{equation}
where
\[
 -\Delta_1 = \mathrm{tridiag} \{ 1,-2,1 \} + P^{(1)} \in \mathbb{R}^{m_1\times m_1},
\] 
such that the entries of the "periodization" matrix $P^{(1)}\in \mathbb{R}^{m_1\times m_1}$ 
are all zeros except 
$$ 
P^{(1)}_{1,m_1}=P^{(1)}_{m_1,1}=1, \quad \mbox{and} \quad  P^{(1)}_{1,1}=P^{(1)}_{m_1,m_1}=-1.
$$ 
Here
$I_{m_1}\in \mathbb{R}^{m_1 \times m_1}$ is the identity matrix, $\Delta_1 =\Delta_2$
is the 1D finite difference Laplacian (endorsed with the Neumann boundary conditions),
and $\otimes$ denotes the Kronecker product of matrices, see Definition \ref{def:Kron}.
We say that the Kronecker rank of both $A$ in (\ref{eqn:Lapl_Kron_D}) and 
$A_\Delta$ in (\ref{eqn:Lap_Kron}) equals to $2$.

Notice that the $m_1 \times m_1$ Laplacian matrices for the Neumann and periodic boundary 
conditions in 1D read as 
 \begin{equation}\label{BC_Neu_peri}
 \Delta_{N} = 
 \begin{bmatrix}
  -1 &  1 &  \cdots &  0 &  0\\
   1 & -2 & \cdots &  0 &  0 \\
   \vdots & \vdots & \ddots  &  \vdots & \vdots \\ 
   0 &  0 &  \cdots & -2 &  1 \\ 
   0 &  0 &  \cdots &  1 & -1 \\ 
 \end{bmatrix}
 \quad \mbox{ and }\quad
 \Delta_P=
  \begin{bmatrix}
  -2 &  1 &  \cdots &  0 &  1\\
   1 & -2 & \cdots &  0 &  0 \\
   \vdots & \vdots & \ddots  &  \vdots & \vdots \\ 
   0 &  0 &  \cdots & -2 &  1 \\ 
   1 &  0 &  \cdots &  1 & -2 \\ 
 \end{bmatrix},
  \end{equation}
respectively. 
 
In the $d$-dimensional case we have the similar Kronecker rank-$d$ representations.
For example, in the case  $d=3$ the "periodic" Laplacian $M_d\times M_d$ matrix 
$A_{\Delta}$ takes a form
\[ 
A_{\Delta} = A_{1,P} \otimes I_2\otimes I_3 + I_1 \otimes A_{2,P} \otimes I_3 + 
I_1 \otimes I_2\otimes A_{3,P},
\] 
such that its Kronecker rank equals to $3$, and similar for the arbitrary $d\geq 3$.

 \subsection{Fast matrix assembling for the stochastic part}\label{ssec:FDM_Kron_stoch} 
 
The Kronecker form representation of the "stochastic" term in (\ref{eqn:FEM_discr})
 further denoted by $A_s$ is more involved. For given stochastically chosen distribution 
 of overlapping cells $G_s$, $s=1,\ldots,L^2$, we construct the minimal non-overlapping 
 decomposition of the full covered  grid domain 
 $\widehat{G} = \cup^{L^2}_{s=1} G_s$ colored by gray in Figure \ref{fig:Coeff}  
 (we have $a(x)=1$ for $x\in \widehat{G}$ and $a(x)=\lambda$ for 
 $x\in \Omega  \setminus \widehat{G}$)  in a form of a union
 of elementary square cells $S_k$, $k=1,\ldots,K$, $K\geq L^2$, each of the grid-size  
 $\overline{m}_0\times \overline{m}_0$,
 \begin{equation}\label{eqn:NO-decomp}
  \widehat{G}= \cup^{K}_{k=1} S_k.
 \end{equation}
 Here $\overline{m}_0= 2^p+1$, and
  $p=0,1,2,\ldots$, is fixed as above by relation $m_1 -1 =(m_s -1) 2^p$.
 In this construction, the non-overlapping elementary cells $S_k$ for different $k$ are allowed to
 have the only common edges of size $\overline{m}_0$.  
 Notice that in the case of non-overlapping
 decomposition (\ref{eqn:Stoch-decomp}) the set of cells $\{S_k\}$ coincides with  
 the initial set $\{G_s\}$ which allows to maximize the  size $\overline{m}_0\times \overline{m}_0$ 
 of each $S_k$, $k=1,\ldots,L^2$, 
 to the largest possible, i.e. to $\overline{m}_0= m_0 2^p+1$. 
  
 To finalize the matrix generation procedure for $A_s$, we define the local 
 $\overline{m}_0\times \overline{m}_0$
 matrices representing the discrete Laplacian with Neumann boundary conditions,
 \[
  \widehat{Q}_{\overline{m}_0}:= \mbox{tridiag} \{ 1,-2,1 \} + \mbox{diag}\{ 1,0,\ldots,0,1 \}
  \in \mathbb{R}^{\overline{m}_0\times \overline{m}_0},
 \]
and the diagonal matrix 
$$
\widehat{I}_{\overline{m}_0}:=\mbox{diag}\{ 1/2,1,\ldots,1,1/2 \}
\in \mathbb{R}^{\overline{m}_0\times \overline{m}_0},
$$
see the visualization in (\ref{BC_Neu_peri}). 
Here, we may select $\overline{m}_0=2,3,5,...$ that corresponds to the choice $p= 0,1,2,...$.
In the case of $\overline{m}_0 \times \overline{m}_0 $ matrix with minimal size 
$\overline{m}_0=2$,  both discrete Laplacians in (\ref{BC_Neu_peri}) simplify to
\begin{equation}\label{matr_mini}
   \Delta_N=   
   \begin{bmatrix}
  -1 &  1 \\
   1 &  -1\\ 
 \end{bmatrix}
 \quad \mbox{ and }\quad
      \Delta_P= 
      \begin{bmatrix}
  -1 &  1 \\
   1 &  -1\\ 
 \end{bmatrix}.
 \end{equation}
 
 Let the subdomain $S_k$ be supported by the index set $I_k^{(1)}\times I_k^{(2)}$ 
of size $\overline{m}_0\times \overline{m}_0$ for $k=1,\ldots,K$.
Introduce the block-diagonal matrices $\overline{Q}_k \in \mathbb{R}^{m_1\times m_1}$
and $\overline{I}_k \in \mathbb{R}^{m_1\times m_1}$ by inserting matrices 
$\widehat{Q}_{\overline{m}_0}$ 
and $\widehat{I}_{\overline{m}_0}$ as diagonal blocks into $m_1\times m_1$ zero matrix in the positions 
$I_k^{(1)}\times I_k^{(1)}$ and $I_k^{(2)}\times I_k^{(2)}$, respectively.

Now the stiffness matrix $A_s$ is represented in the form of a Kronecker product sum as follows,
\begin{equation}\label{eqn:As_Kron}
 A_s= \sum^{K}_{k=1} (\overline{Q}_k \otimes \overline{I}_k + 
 \overline{I}_k \otimes \overline{Q}_k) + P^{(2)},
\end{equation}
where 
$$
P^{(2)}= P^{(1)} \otimes I_{m_1} + I_{m_1} \otimes P^{(1)} \in \mathbb{R}^{M_d\times M_d}
$$ 
is the "periodization" matrix in 2D.
In a $d$-dimensional case the representation (\ref{eqn:As_Kron}) generalizes 
to a sum of $d$-factor Kronecker products
\begin{equation}\label{eqn:As_Kron_d}
 A_s= \sum^{K}_{k=1} (\overline{Q}_k \otimes \overline{I}_k\otimes\cdots\otimes \overline{I}_k+
\ldots + \overline{I}_k \otimes \cdots \otimes \overline{I}_k\otimes \overline{Q}_k)
 + P^{(d)},
\end{equation}
where $P^{(d)}$ is the "periodization" matrix in $d$ dimensions, constructed 
as the $d$-term Kronecker sum similar to the case $d=2$.

The Kronecker product form of (\ref{eqn:Lap_Kron}) and (\ref{eqn:As_Kron}) leads  to
the corresponding Kronecker sum representation for the total stiffness matrix $A$. 
This allows an efficient implementation of 
the matrix assembly and low storage for the stiffness matrix preserving the Kronecker sparsity. 
Hence, it proves the following storage complexity for the matrix $A$.
\begin{lemma}\label{lem:Matr_stor}
 The storage size for the stiffness matrix $A$  is bounded by
 \[
 Stor(A)\approx  Stor(A_s) = O(d \overline{m}_0 K + d m_1).
 \]
\end{lemma}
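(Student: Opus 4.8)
The plan is to exploit the Kronecker-factored sparse representations established in (\ref{eqn:Lap_Kron}) and (\ref{eqn:As_Kron_d}) and to never form any of the expanded $M_d\times M_d$ Kronecker products. Starting from the additive splitting $A=\lambda A_\Delta + (1-\lambda)A_s$ in (\ref{eqn:FEM_matrix}), I would bound $Stor(A)$ by the sum of the storage costs of the two summands and then argue that the stochastic term dominates, which yields the asserted $Stor(A)\approx Stor(A_s)$. The whole estimate is a careful nonzero count performed on the univariate factors rather than on the assembled operator.

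First I would treat the periodic Laplacian $A_\Delta$. By its Kronecker rank-$d$ form, $A_\Delta$ is a sum of $d$ terms, each a $d$-fold Kronecker product of univariate $m_1\times m_1$ factors that are either a tridiagonal 1D Laplacian or an identity. Since a tridiagonal matrix carries $O(m_1)$ nonzeros and the identities are stored implicitly, recording the factors of all $d$ terms costs $O(d\,m_1)$.

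The core of the argument is the stochastic part $A_s$ in the $d$-dimensional form (\ref{eqn:As_Kron_d}). For each index $k=1,\ldots,K$, the block matrices $\overline{Q}_k$ and $\overline{I}_k$ are $m_1\times m_1$ matrices containing a single nonzero diagonal block of size $\overline{m}_0\times\overline{m}_0$; the tridiagonal block of $\overline{Q}_k$ and the diagonal block of $\overline{I}_k$ each carry only $O(\overline{m}_0)$ nonzero entries. The inner sum for a fixed $k$ consists of $d$ Kronecker-product terms built from these same two factors, so recording everything needed to describe all $d$ terms costs $O(d\,\overline{m}_0)$. Summing over $k=1,\ldots,K$ gives $O(d\,\overline{m}_0 K)$. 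The periodization matrix $P^{(d)}$ is a $d$-term Kronecker sum whose only non-identity factor $P^{(1)}$ has $O(1)$ entries, so it contributes at most the lower-order $O(d\,m_1)$ already accounted for by the Laplacian. Collecting the three contributions yields $Stor(A)=O(d\,\overline{m}_0 K + d\,m_1)$.

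The one delicate point, and the step I would state most explicitly, is the bookkeeping convention: ``storage'' must be read as the number of stored entries of the univariate Kronecker factors in sparse format, not of the assembled matrix. The bound hinges on the fact that for each $k$ the factors $\overline{Q}_k,\overline{I}_k$ are reused across the $d$ directional terms, so the per-cell cost stays $O(d\,\overline{m}_0)$ rather than growing with $m_1$ or with $\overline{m}_0^2$; forming any product explicitly would immediately destroy the estimate. Once this convention is fixed the count is routine, and the approximate equality $Stor(A)\approx Stor(A_s)$ follows because the $K\geq L^2$ stochastic cells make the $d\,\overline{m}_0 K$ term dominate the fixed $d\,m_1$ Laplacian contribution for the relevant range of $L$.
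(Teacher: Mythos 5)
Your count is correct and follows exactly the route the paper intends: the lemma is stated as an immediate consequence of the Kronecker-sum representations (\ref{eqn:Lap_Kron}) and (\ref{eqn:As_Kron_d}), with the $O(\overline{m}_0)$ nonzeros per factor $\overline{Q}_k,\overline{I}_k$ over $K$ cells and $d$ directions giving $O(d\,\overline{m}_0 K)$, plus $O(d\,m_1)$ for the tridiagonal Laplacian factors. Your explicit remark that the bound refers to storing the univariate Kronecker factors rather than the assembled sparse matrix is the right reading of the statement and is consistent with the paper's phrase ``preserving the Kronecker sparsity.''
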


Here, in general, the number $K$ of elementary cells\footnote{For example, 
for cells of minimal size, $\overline{m}_0  \times \overline{m}_0$ 
with $\overline{m}_0=2$, as in (\ref{matr_mini}), we have $K=O(m_1^2)$.} 
is larger than $L^2$, and it coincides with $L^2$
 only in the case of non-overlapping decomposition $\widehat{G} = \cup^{L^2}_{s=1} G_s$,
where different patches $G_s$ are allowed to have joint pieces of boundary but no overlapping area.

In the general case $d\geq 2$ and $K\geq L^d$, the Kronecker rank of the 
matrix ${A}$ is bounded by
\[
 \mbox{rank}_{Kron} ({A}) \leq  d \, K.
\]
The Kronecker rank of the stiffness matrix reduces dramatically in two cases:\\
(a) For the case of non-overlapping cells $G_s$, $s=1,\ldots,L^2$, we have 
\[
\mbox{rank}_{Kron} ({A}) \leq L^d.
\]
(b) In the case of cell-centered locations of subdomains $G_s$ 
(special case of geometric homogenization) there holds 
\[
\mbox{rank}_{Kron} ({A}) \leq L^{d-1}. 
\]
 
\begin{figure}[htbp]
\centering
\includegraphics[width=4.0cm]{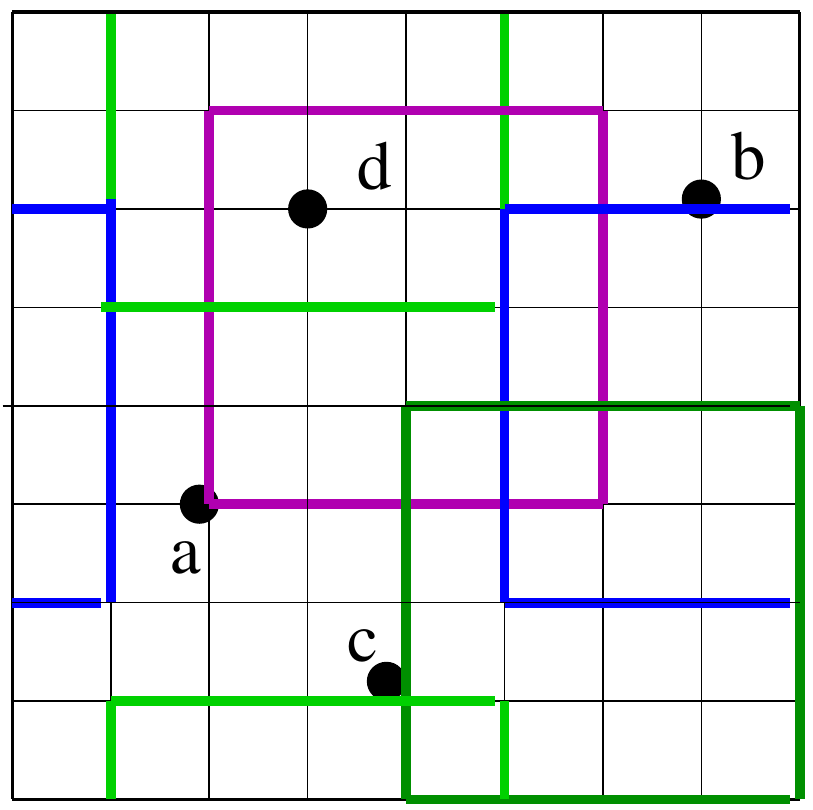}\qquad \quad \quad 
\includegraphics[width=4.0cm]{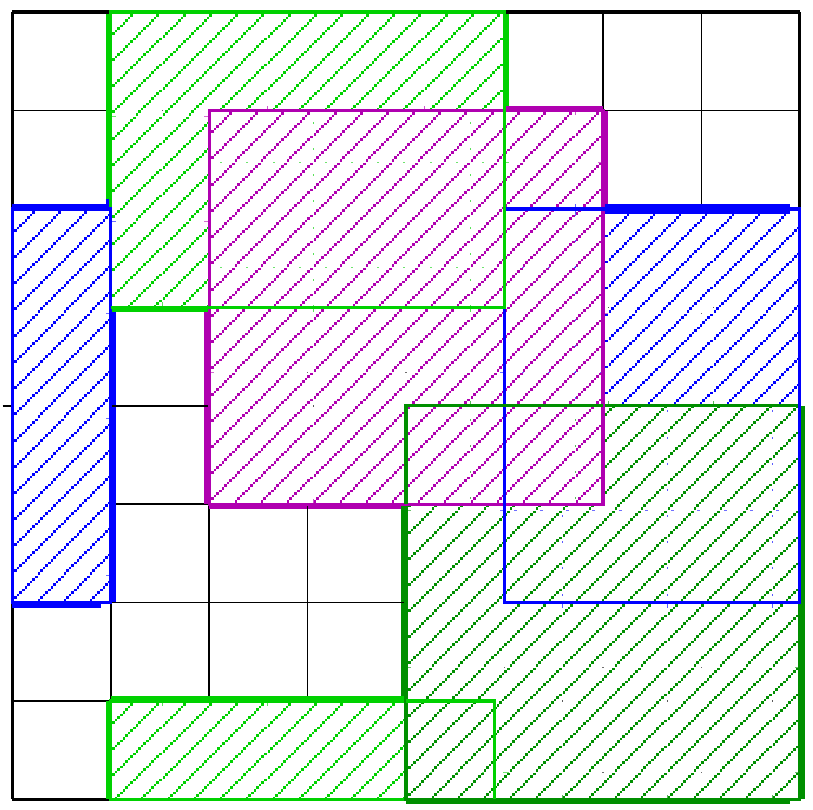} 
\caption{\small Example of the covering domain $\widehat{G}$ (right) and the typical locations of
sampling points for the grid representation of $\mathbb{A}_h(x_h)$ (left).}
\label{fig:CovDomain}
\end{figure} 
The corresponding vector representation ${\bf f}_i\in \mathbb{R}^{M_d}$ of the right hand side
$f_i(x)$ is computed by multiplication
of the discrete upwind gradient matrix $\nabla_h$ with a vector ${\bf y}_i\in \mathbb{R}^{M_d}$. 
Here the vector ${\bf y}_i$ represents 
the multiple of  the vector ${\bf e}_i$, $i=1,2$, and the equation 
coefficient $\mathbb{A}^{(n)}=\mathbb{A}(x) =\mbox{diag}\{{a}^{(n)}(x),{a}^{(n)}(x)\}$,  
discretized on 
the grid $\Omega_h$, i.e. each block-entry of the "discretized" matrix coefficient 
$\mathbb{A}(x)\mapsto A_h(x_h)$ is given by an $M_d$-vector array with $M_d=m^2$,
\[
A_h(x_h) = \mbox{diag}\{{a}^{(n)}(x_h),{a}^{(n)}(x_h)\}, \quad 
  {a}^{(n)}(x_h)_{|x_h\in \Omega_h} \in \mathbb{R}^{M_d}.
\]
Hence, we finally arrive at
\[
 {\bf f}_i= (1-\lambda)\nabla_h\cdot {\bf y}_i, \quad {\bf y}_i=
 [{\bf y}_i(x_h)]\in \mathbb{R}^{M_d}
 \;\; \mbox{with} \;\;{\bf y}_i(x_h)={A}_h(x_h){\bf e}_i, \quad x_h\in \Omega_h,
\]
for $i=1,2$.
Specifically, given the grid-point $x_h\in \Omega_h$, the corresponding diagonal value of 
${A}_h(x_h)$ is defined by $a^{(n)}(x_h)$, see (\ref{eqn:coef_diag}).
Here the variable part $\widehat{a}^{(n)}(x_h)$, describing the jumping coefficient, is assigned
by $1$ for interior points in $\widehat{G}$,  
by $1/2$ for interface points (the angle equals to $\pi/2$), by $3/4$ 
for the "interior" L-shaped corners 
(the angle equals to $3\pi/4$) and by $1/4$ for the "exterior" corner of $\widehat{G}$ 
(the angle equals to $\pi/4$), see points (d), (b), (c) and (a) in 
Figure \ref{fig:CovDomain}, respectively.
This figure corresponds to $L=2$, the discretization parameter $n_0=4$ and periodic completion 
of the geometry. One observes the complicated shape of the strongly jumping coefficients.

\subsection{Numerical analysis of the FEM approximation error }\label{ssec:DiscErr}

We tested convergence of the solutions  
on a sequence of dyadic refined grids, for the fixed configuration of coefficients and the 
right hand side given by $f=sin(2\pi x) cos(6\pi y)$.
Test examples are performed for $L=2, \; 4,\; 8 $, corresponding to $4$, $16$ and $64$  
bumps in the  coefficients, respectively.   
We compare the solution vectors ${\bf u}_p$ calculated on a sequence of five dyadic refined grids with 
the grid size $M_d=m_p^2=M_{d,p}$, with $m_p=2^{4+p}-1$, $p=1,\ldots,5$, equal to
$M_{d,p}=31^2$, $63^2$, $127^2$, $255^2$, $511^2$ and $1023^2$, respectively.
The matrix size is given by $ M_d\times M_d$. A FEM interpolation error in the $H^r(\Omega)$-norm
is expected of the order of $O(m^{-\beta +r})$ for $\beta \in (0,2]$ and $r\in [0,1]$, where 
$h=O(1/m)$ and $\beta$ measures the regularity of the solution  $u\in H^\beta(\Omega)$.
\begin{figure}[htbp]
\centering
\includegraphics[width=7.6cm]{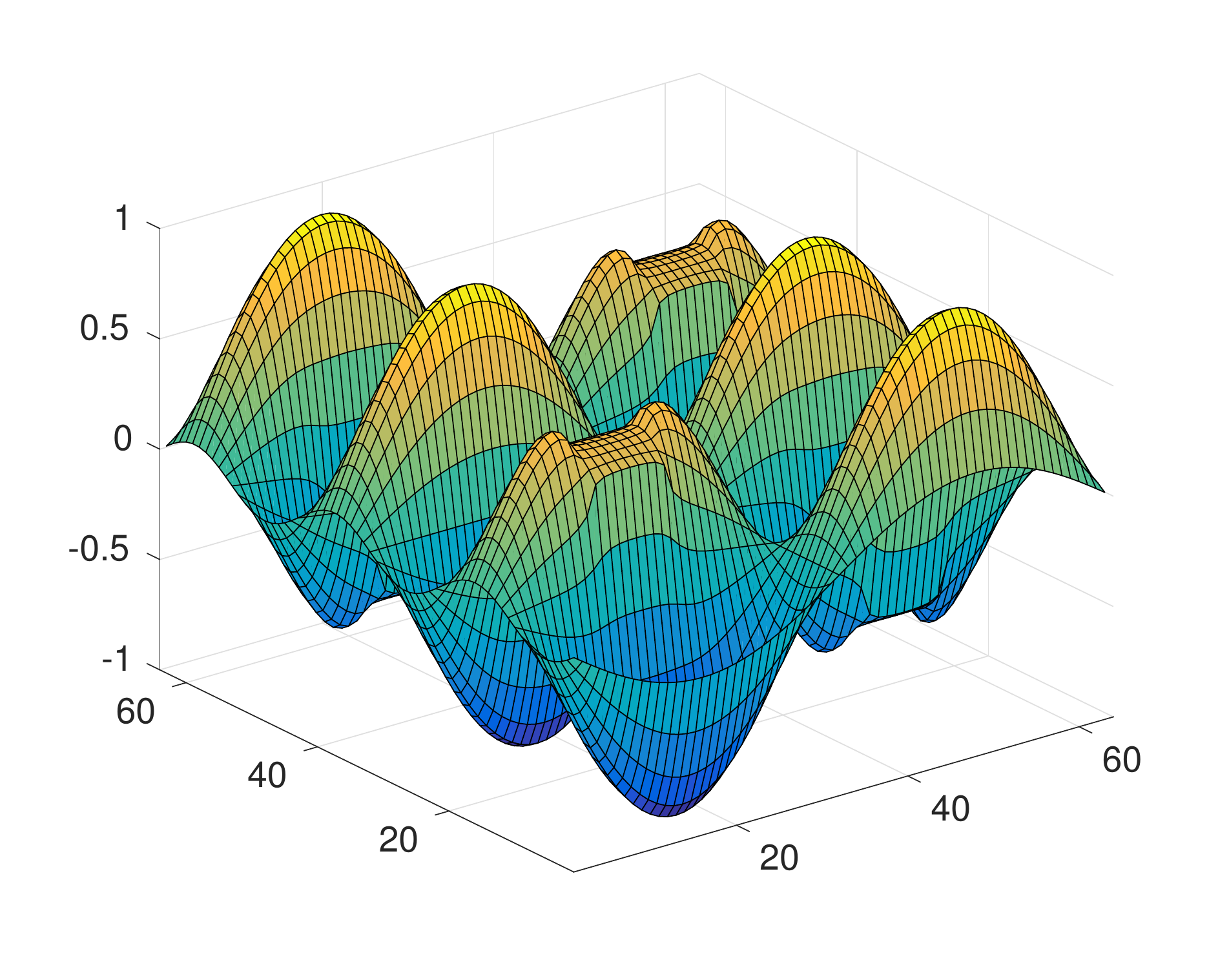}
\includegraphics[width=7.6cm]{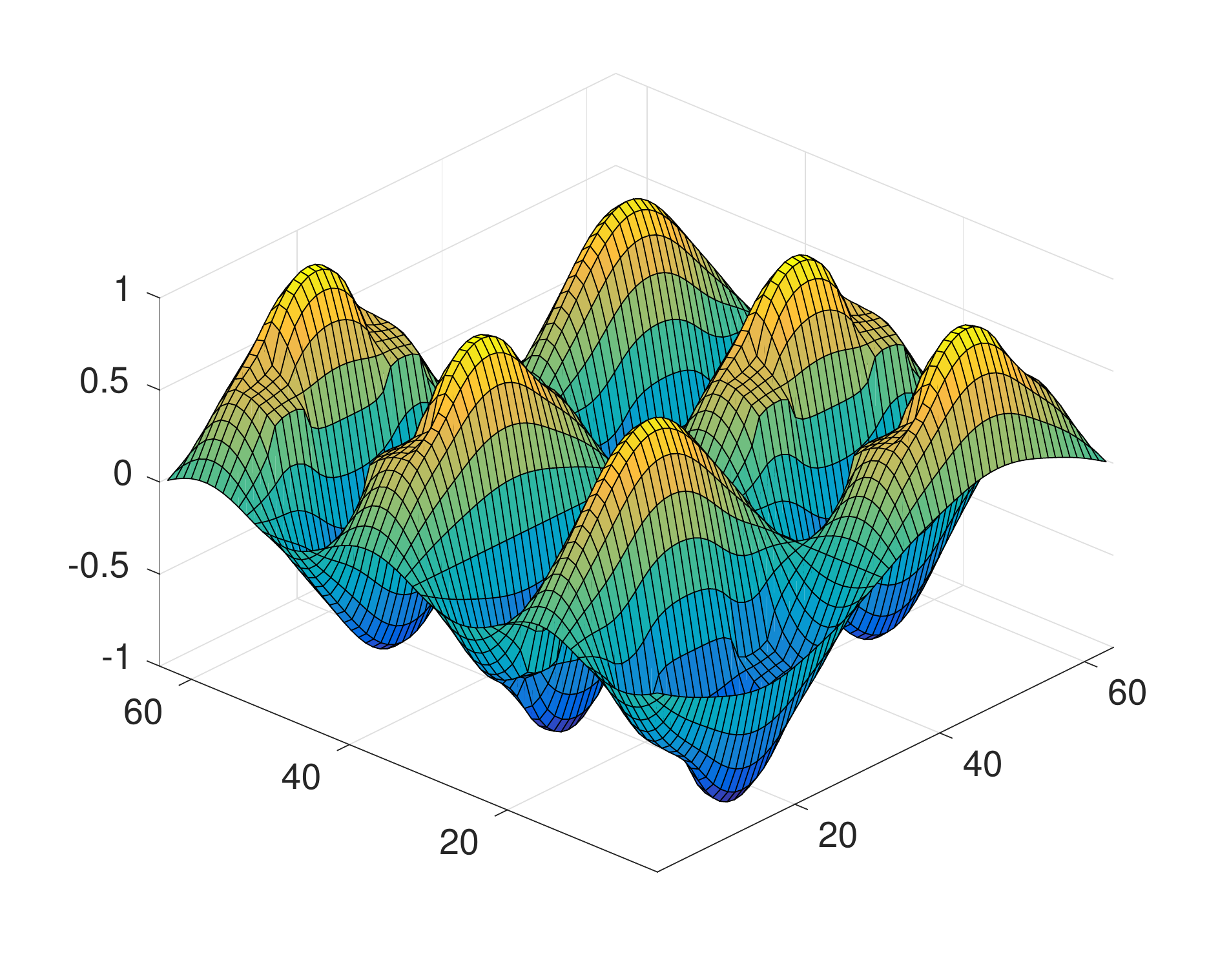} 
\caption{\small Examples of solutions $u$ for $m=255$, where $L=2$ (left) and $L=4$ (right).}
\label{fig:RHSsolut_L2}
\end{figure} 

Table \ref{Tab:conv_grids} shows the decay of the solution error in $L_2$-norm estimated 
on a sequence of dyadic refined grids, and for different values of $L = 2,4,8$. 
The solution is supposed to be represented on a sequence of grid 
in the form ${\bf u}_p= {\bf c}_0 + {\bf c}_1 h_p^\beta$ up to low order term.
We expect 
the asymptotic error behavior $O(h^{\beta})$ with $1\leq \beta \leq 2$, where in our case 
$\beta$ is close to $3/2$ that corresponds to decay factor  $2\sqrt{2} \approx 2.8 $. 
The latter can be expected in the case of reduced regularity in the solution caused by cusps 
by the multiple interior corners in the configuration of coefficient jumps. 
The  respective convergence rate in the $H^1$-norm is of the order of $O(h^{\beta-1})$.
\begin{table}[htb]
\begin{center}%
\begin{tabular}
[c]{|r|r|r|r|r|r|}%
 \hline  
 grid size   $m_p$       & $63 $ & $127 $ & $255 $ & $511 $ & $1023 $    \\
  \hline
 vector size $M_{d,p}$   & $3969$ & $16129$ & $65025$ & $261121$ & $1046529$    \\
 \hline
$L=2$   & -& $0.0051$ &$0.0015$ & $5.03e-04$ & $1.806e-04$ \\
 \hline  
 $L=4$   & -& $0.0057$ & $0.0020$ & $7.13e-04$ & $2.638e-04$ \\
 \hline  
 $L=8$    & - & -& $0.0035$ & $1.40e-03$  & $5.257e-04$ \\
 \hline 
 \end{tabular}
\caption{Differences in the relative norms of solutions 
$\|{\bf u}_p -{\bf u}_{p-1}\|_2/\|{\bf u}_{p-1}\|_2$, $p=1,\ldots,5$, on dyadic refined grids computed 
in $L_2$-norm  for $L=2,\; 4,\; 8$ and $\alpha=0.5$, $\lambda=0.1$. 
}
\label{Tab:conv_grids}
\end{center}
\end{table}
 
Figure \ref{fig:RHSsolut_L2} illustrates examples of the 
solution ${\bf u}$ discretized over $m\times m$ grid with the univariate grid size $m=255$
and for $L=2$ and $L=4$.

Figure \ref{fig:Er_discr_L4} represents differences in solutions on pair of $m \times m$ grids 
with $m=127,\; 255$ (left) and $m=511, \; 1023$ (right) for $f=sin(2\pi x) cos(6\pi y)$
and fixed $L=4$. 
One can observe the expected increase
in the approximation error towards the interior corners in the geometry specifying
the jumping coefficient function. The error decays by a factor about $10$ which
agrees with the expected decay by $2.8^2$.
For ease of comparison both solutions are interpolated onto the common grid with $m=63$.
 \begin{figure}[htbp]
\centering
\includegraphics[width=7.6cm]{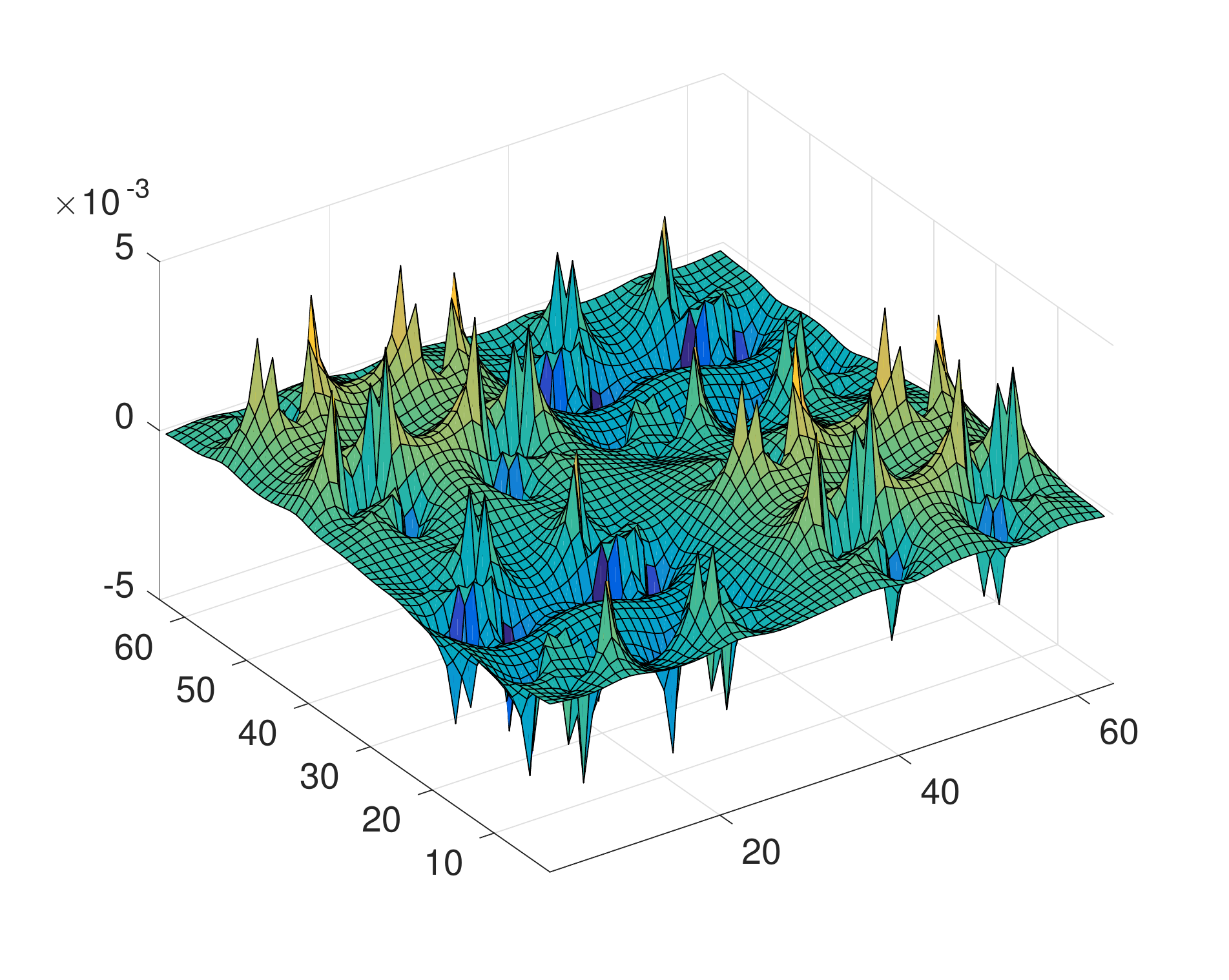}
\includegraphics[width=7.6cm]{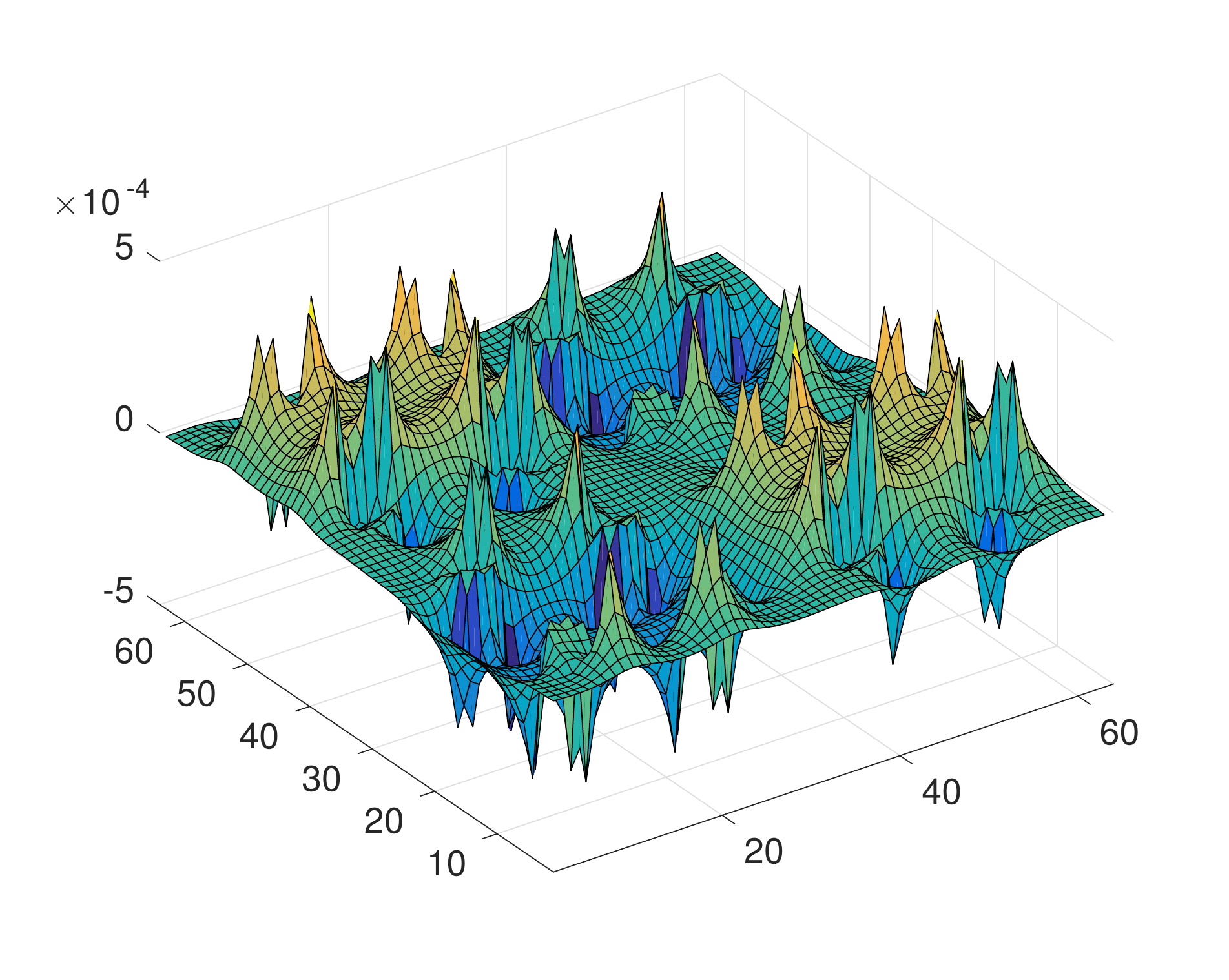} 
\caption{\small  Differences in solutions on the $m \times m$ grids with $m=127, \;255$ (left) and
$m=511, \; 1023$ (right) for  $L=4$.}
\label{fig:Er_discr_L4}
\end{figure}

\subsection{Preconditioned CG iteration}\label{ssec:PCG} 
 
 Let the right-hand side in (\ref{eqn:FEM_discr}) satisfy $\langle F, 1 \rangle=0$, 
 then for a fixed $m$, the equation 
 \begin{equation}\label{eqn:DD_Eqn}
  A^{(n)} {\bf u} =(\lambda A_\Delta + (1-\lambda) A^{(n)}_{s}){\bf u} = {\bf f}
 \end{equation}
has the unique solution. We solve this equation by the preconditioned conjugate gradient (PCG) 
iteration (routine \emph{pcg} in Matlab library) with the preconditioner
\[
 B= \frac{1+\lambda}{2} A_\Delta + \delta I= \frac{1+\lambda}{2} \Delta_h  + \delta I, 
\]
where $\delta >0$ is a small regularization parameter introduced only for  
stability reasons (can be ignored in the theory) and $I$ is the $M_d\times M_d$ identity matrix. 

It can be proven that the condition number of preconditioned matrix is uniformly 
bounded in $m_1$, $L$ and in the number of stochastic realizations $n=1,\ldots,N$.
The particular estimates on the condition number in terms of a parameter $\lambda$
can be derived by introducing the average coefficient
$$
{a}_0(x)=\frac{1}{2}(a^+(x) + a^-(x)),
$$ 
where $a^+(x)$ and $a^-(x)$ are chosen as {\it majorants and minorants} 
of $a^{(n)}(x)$ in (\ref{eqn:Am_coef}), respectively. The following simple result holds.
\begin{lemma}\label{lem:Cond_Number}
 Given the preconditioner $B$ with $\delta=0$, then
 the condition number of the preconditioned matrix
$B^{-1} {A}^{(n)}$ is bounded by
\[
 cond \{ B^{-1}{A}^{(n)}  \} \leq  C{\lambda}^{-1}.
\]
\end{lemma}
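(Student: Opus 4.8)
The plan is to establish a two-sided spectral equivalence between $A^{(n)}$ and the preconditioner $B=\frac{1+\lambda}{2}A_\Delta$ and then read off the condition number as the ratio of the extreme constants. First I would fix the working space. Since the problem is posed in the periodic setting with no zeroth-order term, both $A^{(n)}$ and $A_\Delta$ are symmetric positive semidefinite with common kernel spanned by the constant vector $\mathbf{1}$. Under the hypothesis $\langle F,1\rangle=0$ the iteration lives on the complement $V_0=\{{\bf u}:\langle{\bf u},\mathbf{1}\rangle=0\}$, on which both matrices are positive definite; there $\mathrm{cond}\{B^{-1}A^{(n)}\}$ equals the ratio of the largest to the smallest value of the Rayleigh quotient ${\bf u}^T A^{(n)}{\bf u}/{\bf u}^T B{\bf u}$, so it suffices to bound this quotient from above and below.

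Next I would exploit the additive structure $A^{(n)}=\lambda A_\Delta+(1-\lambda)\widehat{A}_s^{(n)}$ from (\ref{eqn:FEM_matrix}). For the lower bound I note that $\widehat{A}_s^{(n)}$ is positive semidefinite: by (\ref{eqn:As_Kron}) it is a sum of Kronecker products of local Neumann Laplacians (each positive semidefinite) together with the periodization term, and it represents the nonnegative Dirichlet energy ${\bf u}^T\widehat{A}_s^{(n)}{\bf u}=\int_{\widehat G}|\nabla\phi_{\bf u}|^2\,dx$ associated with the weight $\widehat a^{(n)}\ge 0$. Hence ${\bf u}^T A^{(n)}{\bf u}\ge\lambda\,{\bf u}^T A_\Delta{\bf u}$. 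For the upper bound I use that $\widehat a^{(n)}(x)\le 1$ pointwise, so the weighted energy on the covered region is dominated by the full energy, ${\bf u}^T\widehat{A}_s^{(n)}{\bf u}=\int_{\widehat G}|\nabla\phi_{\bf u}|^2\le\int_\Omega|\nabla\phi_{\bf u}|^2={\bf u}^T A_\Delta{\bf u}$, i.e.\ $\widehat{A}_s^{(n)}\preceq A_\Delta$; therefore ${\bf u}^T A^{(n)}{\bf u}\le\lambda\,{\bf u}^T A_\Delta{\bf u}+(1-\lambda){\bf u}^T A_\Delta{\bf u}={\bf u}^T A_\Delta{\bf u}$. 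Combining the two bounds gives $\lambda\,A_\Delta\preceq A^{(n)}\preceq A_\Delta$ on $V_0$.

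Finally I would substitute $B=\frac{1+\lambda}{2}A_\Delta$. The inequalities become $\frac{2\lambda}{1+\lambda}\,{\bf u}^T B{\bf u}\le{\bf u}^T A^{(n)}{\bf u}\le\frac{2}{1+\lambda}\,{\bf u}^T B{\bf u}$, so every generalized eigenvalue of $A^{(n)}{\bf v}=\mu B{\bf v}$ lies in $[\frac{2\lambda}{1+\lambda},\frac{2}{1+\lambda}]$ and $\mathrm{cond}\{B^{-1}A^{(n)}\}\le\frac{2/(1+\lambda)}{2\lambda/(1+\lambda)}=\lambda^{-1}$, which is the claim with $C=1$.

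The step I expect to be the main obstacle is the upper comparison $\widehat{A}_s^{(n)}\preceq A_\Delta$: one must check that the \emph{locally assembled} stochastic matrix --- built in (\ref{eqn:As_Kron}) from the Neumann blocks $\widehat Q_{\overline m_0}$ over the covered cells --- is genuinely dominated by the \emph{globally assembled} periodic Laplacian. The clean way to handle this is to pass to the non-overlapping decomposition $\widehat G=\cup_k S_k$ so that edges are counted once, identify both quadratic forms with the (possibly mass-lumped) discrete Dirichlet energies $\int_{\widehat G}|\nabla\phi_{\bf u}|^2$ and $\int_\Omega|\nabla\phi_{\bf u}|^2$, and invoke $\widehat G\subseteq\Omega$ together with $\widehat a^{(n)}\le 1$. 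Any residual constant coming from the slight discretization mismatch between the finite-difference Laplacian in $B$ and the finite-element stiffness assembly of $A^{(n)}$ is harmless, as it is absorbed into the constant $C$ of the statement.
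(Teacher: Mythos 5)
Your proof is correct and rests on the same underlying idea as the paper's: the pointwise bounds $\lambda\le a^{(n)}(x)\le 1$ give the two-sided spectral equivalence $\lambda A_\Delta\preceq A^{(n)}\preceq A_\Delta$, and comparison with the averaged coefficient $\frac{1+\lambda}{2}$ yields the ratio $\lambda^{-1}$. The only difference is that the paper delegates this comparison to Lemma 4.1 of \cite{BokhSRep:15} (with majorant $a^+=1$, minorant $a^-=\lambda$, and $q=\frac{1-\lambda}{1+\lambda}$), whereas you verify it directly; your handling of the common kernel and of the FD/FEM discretization mismatch absorbed into $C$ is consistent with the paper's statement.
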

\begin{proof}
Lemma 4.1 in \cite{BokhSRep:15} shows that the preconditioner $A_0$ generated by the coefficient
${a}_0(x)=\frac{1}{2}(a^+(x) + a^-(x))$ allows the condition number estimate
\[
 cond \{{A}_0^{-1} {A}^{(n)}  \} \leq C \max\frac{1+q}{1-q}, \quad\mbox{with}\quad
 q:=\max(a^+(x) - {a}_0(x))/{a}_0(x)<1.
\]
 The preconditioner $B$ corresponds to the choice $a^+(x)=1$ and $a^-(x)=\lambda$,
 hence, we obtain ${a}_0(x)=\frac{1+\lambda}{2}$ and the result follows.
\end{proof}
 
The PCG solver for the system of equations (\ref{eqn:FEM_syst}) with the shifted discrete Laplacian 
as the preconditioner demonstrates robust convergence with the rate $q\ll 1$.
In the practically interesting case $\alpha \approx 0.5$ we found that 
$q$ does not depend on $\lambda$. This can be explained by the fact that in this case 
the total overlap in all subdomains covers the large portion of the computational box $\Omega$.
In all numerical examples considered so far the number of PCG iterations was smaller than $10$
for the residual stopping criteria $\delta=10^{-8}$. 
We use the univariate grid size $m_1=m_s$, corresponding to the 
choice $p=0$ in (\ref{eqn:Grid_relation}) which is fine enough to resolve geometry for larger $L$.

 \section{Asymptotic convergence to the stochastic average}
 \label{sec:Comp_Ahom}
 
 In this section, we describe the computational scheme for calculation of 
 the homogenized coefficient matrix for each stochastic realization.
 
 \subsection{Computational scheme for the stochastic average}\label{ssec:SchemeAverage}
 
  For fixed stochastic realizations specifying the variable part in the $2\times 2$ coefficient matrix
 $\widehat{\mathbb{A}}^{(n)}(x)$, $n=1,\ldots,N$, we consider the problems 
 \begin{equation}\label{eqn:RHS_grad}
 -\lambda \Delta\phi_i -(1-\lambda)\nabla \cdot \widehat{\mathbb{A}}^{(n)}(\cdot)
 ({\bf e}_i +\nabla \phi_i )=0,
 \end{equation}
 for $i=1,2$. 
 The right-hand side in equation (\ref{eqn:RHS_grad}), 
 rewritten in the canonical form (\ref{eqn:scaled_setting}), reads as 
 \[
  f_i(x)= (1-\lambda) \nabla \cdot \widehat{\mathbb{A}}^{(n)}(x){\bf e}_i.
 \]  
Taking into account (\ref{eqn:Am_coef}), where the diagonal of $\widehat{\mathbb{A}}^{(n)}(x)$ 
is defined in terms of the scalar function $\widehat{a}^{(n)}(x)$, we arrive at
\[
 f_1(x)= (1-\lambda)\dfrac{\partial \widehat{a}^{(n)}(x)}{\partial x_1}, \quad
 f_2(x)=(1-\lambda)\dfrac{\partial \widehat{a}^{(n)}(x)}{\partial x_2}.
\]
 Figure \ref{fig:RHSsolutA} illustrates
an example of the calculated (reshaped) right-hand side vector in $\mathbb{R}^{m_1\times m_1}$ 
and the respective solution
$\phi_1$ for $L=12$, $m_1=193$, and $m_0=16$. 
\begin{figure}[htbp]
\centering
\includegraphics[width=7.6cm]{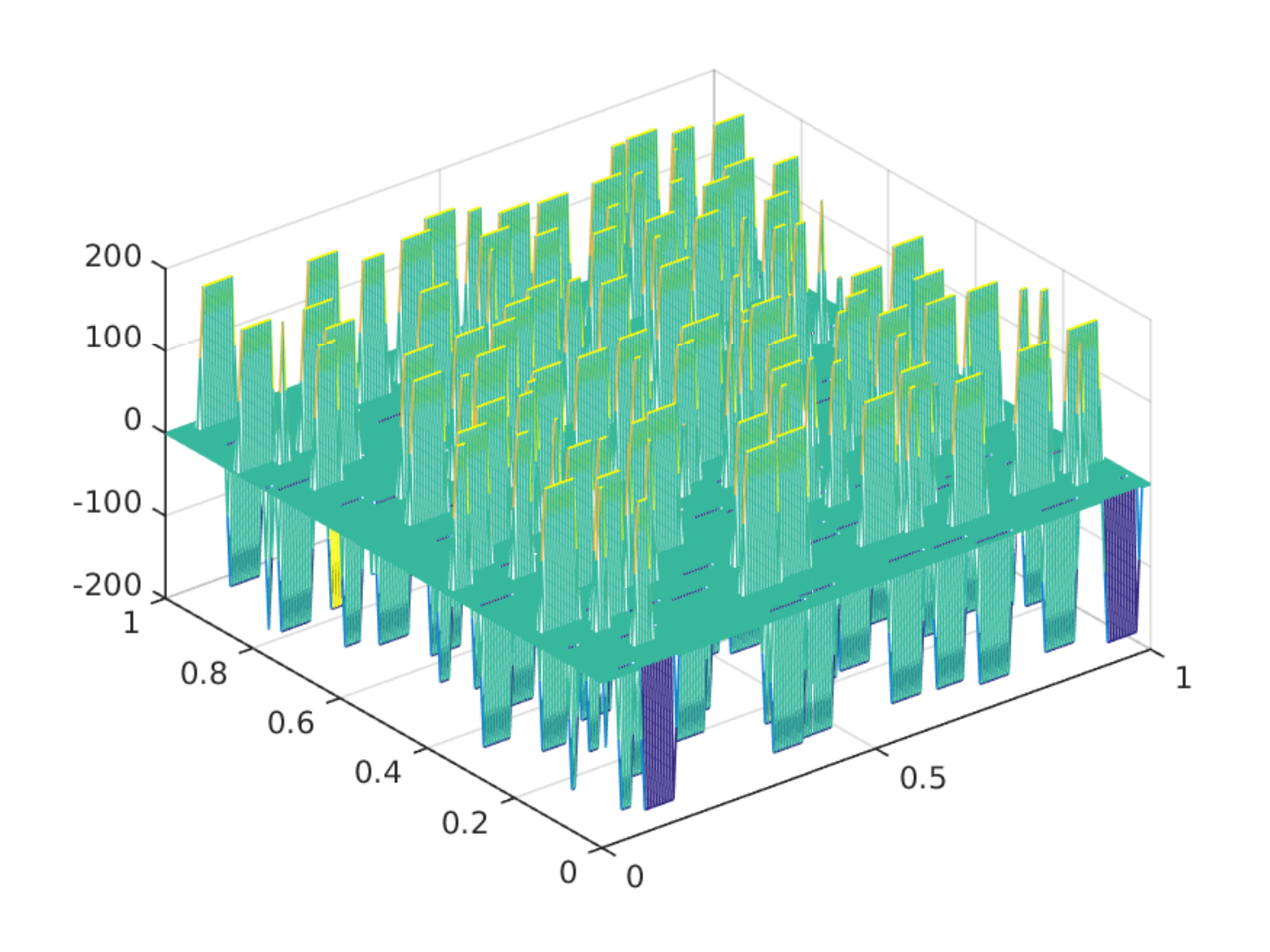}
\includegraphics[width=7.6cm]{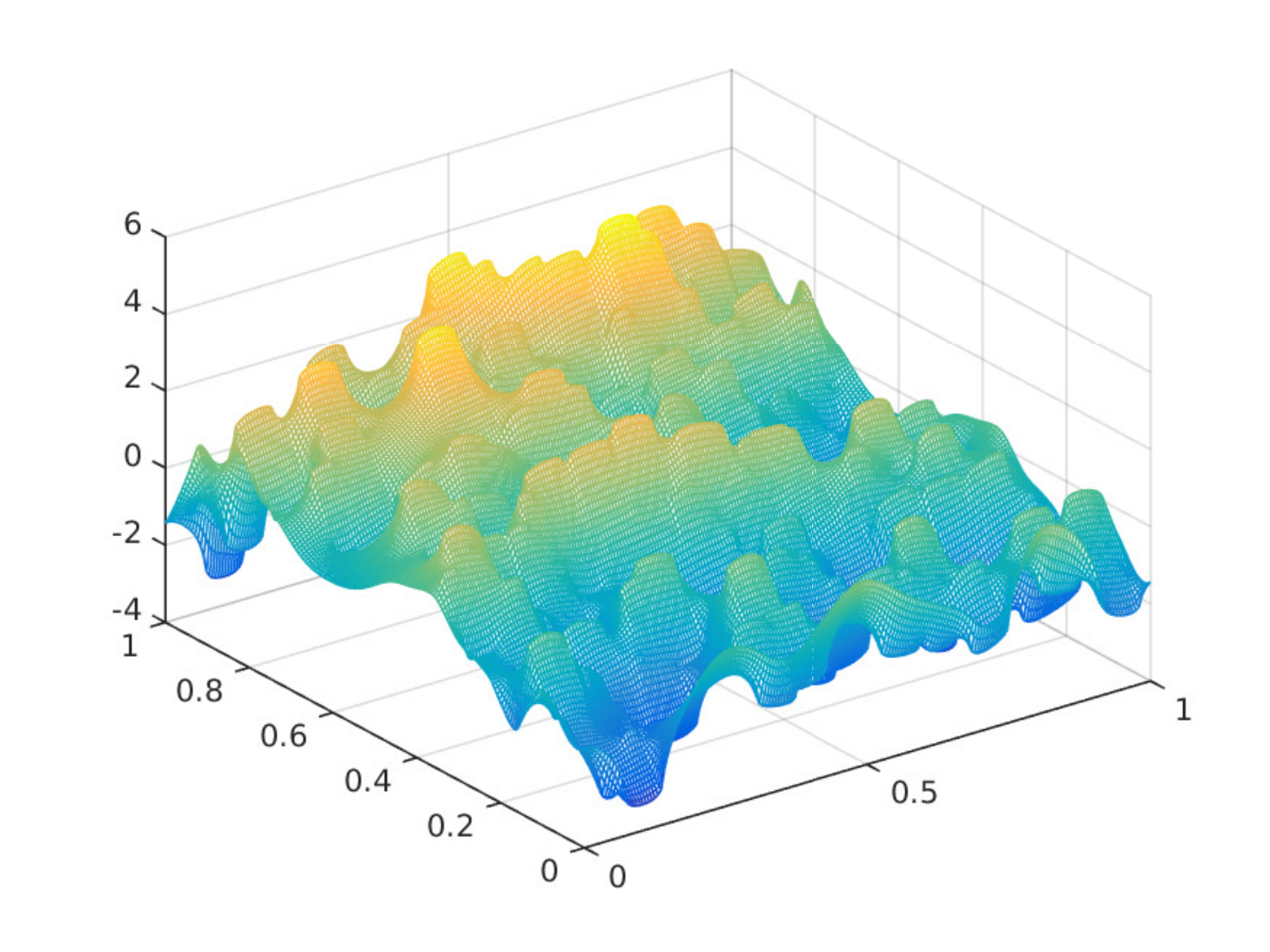} 
\caption{\small Right-hand side (left) and the solution  $\phi_1$ (right) for 
$L=12$, $m_1=193$, $m_0=16$.}
\label{fig:RHSsolutA}
\end{figure}

Fixed $L$, for the particular realization $\mathbb{A}^{(n)}$, by definition, the averaged coefficient matrix 
$\bar{\mathbb{A}}^{(n)}_L = \bar{\mathbb{A}}^{(n)} =[\bar{a}^{(n)}_{ij}]\in \mathbb{R}^{2\times 2}$, 
$i,j=1,2$, with the constant entries is given by
\begin{equation}\label{eqn:A_hom_m}
 \bar{\mathbb{A}}^{(n)}  {\bf e}_i =\int_\Omega \mathbb{A}^{(n)} (x)({\bf e}_i + 
 \nabla \phi_i ) dx,
\end{equation}
which implies the representation for matrix elements
\[
 \bar{a}^{(n)}_{L,ij} \equiv \bar{a}^{(n)}_{ij} = 
 \int_\Omega [(\lambda I_{2\times 2} +   (1-\lambda)\widehat{\mathbb{A}}^{(n)} (x))
 ({\bf e}_i  + \nabla \phi_i )]_j) dx,  \quad i,j=1,2.
 \]
The latter leads to the entry-wise representation of the matrix 
$\bar{\mathbb{A}}^{(n)} =[\bar{a}^{(n)}_{i j}]$, $i, j=1,2$,
\begin{align} \label{eqn:Ahom_m_entry}     
    &   \bar{a}^{(n)}_{11}= \int_\Omega a^{(n)}(x) \Bigl(\dfrac{\partial \phi_1}{\partial x_1} +1\Bigr) dx,
    \nonumber \\
    & \bar{a}^{(n)}_{12}= \int_\Omega a^{(n)}(x) \dfrac{\partial \phi_1}{\partial x_2}  dx,
 \nonumber \\
    & \bar{a}^{(n)}_{21}= \int_\Omega a^{(n)}(x) \dfrac{\partial \phi_2}{\partial x_1}  dx,
  \nonumber \\
 & \bar{a}^{(n)}_{22}= 
 \int_\Omega a^{(n)}(x) \Bigl(\dfrac{\partial \phi_2}{\partial x_2} +1 \Bigr) dx.
\end{align}
The representation (\ref{eqn:Ahom_m_entry}) ensures the symmetry of the homogenized matrix 
$\bar{\mathbb{A}}^{(n)}$, i.e. $\bar{a}^{(n)}_{ij}=\bar{a}^{(n)}_{ji}$. 
Indeed, we calculate the difference between
the scalar product of the first equation in 
(\ref{eqn:RHS_grad}) with $\phi_2$,
\begin{equation*}\label{eqn:RHS_gradP2}
 \langle \lambda \nabla \phi_1 + (1-\lambda) \widehat{\mathbb{A}}^{(n)} \nabla \phi_1,\nabla \phi_2 \rangle
 -(1-\lambda) \langle\nabla \cdot \widehat{\mathbb{A}}^{(n)}(\cdot){\bf e}_1,\phi_2 \rangle=0,
 \end{equation*}
and the second equation in (\ref{eqn:RHS_grad}) with $\phi_1$, 
\begin{equation*}\label{eqn:RHS_gradP1}
 \langle \lambda \nabla \phi_2 + (1-\lambda) \widehat{\mathbb{A}}^{(n)} \nabla \phi_2,\nabla \phi_1 \rangle
 -(1-\lambda) \langle\nabla \cdot \widehat{\mathbb{A}}^{(n)}(\cdot){\bf e}_2,\phi_1 \rangle=0,
 \end{equation*}
and get the relation
\[
  \langle \dfrac{\partial \widehat{a}^{(n)}}{\partial x_1}, \phi_2 \rangle
  -\langle  \dfrac{\partial \widehat{a}^{(n)}}{\partial x_2}, \phi_1  \rangle =0,
\]
which then implies the desired property via integration by parts, and 
taking into account the relation (\ref{eqn:coef_diag}),
\[
 \langle  a^{(n)}, \dfrac{\partial \phi_2}{\partial x_1}  \rangle
 = \langle a^{(n)}, \dfrac{\partial \phi_1}{\partial x_2}  \rangle.
\]

In numerical implementation, we apply the Galerkin  scheme for FEM discretization 
of equation (\ref{eqn:RHS_grad}) its right-hand side. 
We use the same quadrature rule for computation of integrals in (\ref{eqn:Ahom_m_entry})
thus preserving the symmetry in the matrix 
$\mathbb{A}^{(n)}$ inherited from the exact variational formulation 
(see argument above and Section \ref{ssec:Symmetr_Quartic_tensQ} 
for the more detailed discussion).
 
Integrals over $\Omega$ in (\ref{eqn:A_hom_m}), (\ref{eqn:Ahom_m_entry})
for the  matrix entries $(\bar{\mathbb A}^{(n)})_{i,j}$,  $i,j=1,2$,
are calculated (approximately) by the scalar 
product of the $N$-vector of all-ones with the discrete representation of integrand
on the grid $\Omega_h$, see Figure \ref{fig:CovDomain}.

\begin{table}[htb]
\begin{center}%
{\footnotesize
\begin{tabular}
[c]{|r|r|r|r|r|r|r|r|r|r|}%
 \hline
 Tol. $\delta$   & $10^{-3}$ & $10^{-4}$ & $10^{-5}$ & $10^{-6}$ & $10^{-7}$ &$10^{-8}$ &
 $10^{-9}$ & $10^{-10}$ &$10^{-11}$   \\
  \hline
 $\| \mathbb{A}- \mathbb{A}^T  \|$  &$10^{-6}$ & $3 \;10^{-7}$ & $ 10^{-7}$ & 
 $3\;10^{-9}$&$10^{-10}$&$3.6\;10^{-11}$&$10^{-11}$&
 $10^{-12}$ & $5.7\;10^{-15}$ \\
 \hline
 \end{tabular}
\caption{\small Symmetry in the matrix $\bar{\mathbb{A}}^{(n)}_L$, with fixed $n$, vs. residual stopping 
criteria $\delta$.}
\label{Tab:conv_Symmet_thresh}
}
\end{center}
\end{table}
To complete this section, we check numerically that the FEM discretization scheme preserves the symmetry in 
the matrix $\bar{\mathbb{A}}^{(n)}_L$ for fixed $L$ 
if the discrete system of equations (\ref{eqn:FEM_Galerk}) is solved accurately enough.
Table  \ref{Tab:conv_Symmet_thresh} demonstrates that the symmetry in the matrix 
$\bar{\mathbb{A}}^{(n)}_L$ with fixed $n$ is recovered on the level of residual stopping criteria $\delta>0$ in 
the preconditioned iteration for solving the discrete system of equations. 
For this calculation we set $L=4$, $m_0=8$, $\alpha=0.5$ and $\lambda=0.2$.

\subsection{Asymptotic of systematic error and standard deviation}
\label{ssec:SystErStandDev}

The set of numerical approximations $\{\bar{\mathbb{A}}^{(n)}_L\}$
to the homogenized matrix $\mathbb{A}_{\mbox{\footnotesize hom}}$ is calculated by 
(\ref{eqn:A_hom_m}) for the sequence $\{\mathbb{A}^{(n)}_L(x)\}$ of $n=1,...,N$ realizations, 
where $N$ is large enough,
and the artificial period $L$ defines the size of Representative Volume Elements (RVE). 
For a fixed $L$, the approximation  $\bar{\mathbb{A}}^{N}_L$ is computed 
as the \emph{empirical average}  of the sequence $\{\bar{\mathbb{A}}^{(n)}_L\}_{n=1}^N$,
\begin{equation} \label{eqn:A_hom_Empir}
 \bar{\mathbb{A}}^{N}_L= \frac{1}{N}\sum^{N}_{n=1} \bar{\mathbb{A}}^{(n)}_L.
\end{equation}

By the law of large numbers we have that the empirical average converges 
almost surely to the \emph{ensemble average} (expectation)
\begin{equation} \label{eqn:A_hom_Ensemble}
 \langle  \bar{\mathbb{A}}_L\rangle_L =  
\lim\limits_{N \to \infty} \bar{\mathbb{A}}^{N}_L. 
\end{equation}
Furthermore, by qualitative homogenization theory, as the artificial period $L\to \infty$,
this converges to the homogenized matrix
\begin{equation} \label{eqn:A_hom}
 \mathbb{A}_{\mbox{\footnotesize hom}}: = \lim\limits_{L\to \infty} \langle  \bar{\mathbb{A}}_L\rangle_L .
\end{equation}
In what follows, we use the entry-wise notation for $d\times d$ 
matrices ${\mathbb{A}}=[a_{ij}]$, $i,j=1,\ldots,d$,
for example, $\langle \bar{\mathbb{A}}_L \rangle =[\bar{a}_{L,ij}]$ and 
$\bar{\mathbb{A}}_L^{(n)}=[\bar{a}_{L,ij}^{(n)}]$, etc.

In terms of square expectations, the convergence rate for the computable quantities 
can be estimated by, see \cite{GlNeuOtto:13},
\begin{equation}
\label{eqn:CLT_RVE}
 \left\langle |\bar{\mathbb{A}}^{N}_L - \mathbb{A}_{\mbox{\footnotesize hom}}|^2 \right\rangle_L^{1/2} \leq  
  \frac{C_1}{\sqrt{N}}L^{-d/2}  + C_2 L^{-d}  \log^d L.
\end{equation}
We  numerically study  the asymptotic of both terms on the right-hand side 
 of (\ref{eqn:CLT_RVE}) separately by considering the \emph{random part} of the error,
 \begin{equation}
\label{eqn:var}
  var^{1/2}_L (\bar{\mathbb{A}}_L ) = 
  \langle \left|\mathbb{A}_{\mbox{\footnotesize hom}} -\langle \bar{\mathbb{A}}_L\rangle_L \right|^2 
  \rangle_L^{1/2} \leq C_1L^{-d/2},
 \end{equation}
and the \emph{systematic error}
 \begin{equation}
\label{eqn:sysErr}
\left|\mathbb{A}_{\mbox{\footnotesize hom}}- \langle \bar{\mathbb{A}}_L\rangle_L \right| \leq C_2 L^{-d} \log^d L,
\end{equation}
where $\langle \bar{\mathbb{A}}_L\rangle_L$ is calculated for large enough $N$.

\subsection{Covariances of the homogenized matrix in the form of quartic tensor }
\label{ssec:Symmetr_Quartic_tensQ}

Let $\langle\cdot\rangle_L$ be an ensemble of uniformly elliptic symmetric coefficient fields
on the $d$-dimensional torus $[0,L)^d$. Assume that it is invariant under translation (stationary)
and under the group ${\mathcal G}$ of all orthogonal transformations $R$ of $\mathbb{R}^d$ that leave 
the (hyper-)cube $[0,L)^d$ invariant 
(this is generated by rotations in one of the Cartesian two-dimensional planes 
and reflections along any Cartesian hyper-plane) in the sense of (\ref{wg05}) below.
In case of isotropic (i.e. scalar) coefficient fields $\mathbb{A}(x)$, (\ref{wg05}) turns into
\begin{align*}
\mathbb{A}(R\cdot)\;\mbox{and}\;\mathbb{A}\;\mbox{have the same distribution under}\;\langle\cdot\rangle_L,
\end{align*}
which is certainly the case for the ensembles we 
consider numerically.


Let $X$ be a finite-dimensional space of functions on the torus $[0,L)^d$
of side-length $L$ with square-integrable gradients, e.g. coming from continuous, 
piecewise affine Finite Elements. 
For a given realization $\mathbb{A}(x) =\mathbb{A}^{(n)}(x)$ of the coefficient field and any
direction $i=1,\cdots,d$, we consider $\phi_i\in X$ defined through 
\begin{align}\label{wg03} 
\forall\;\psi\in X\quad\int_{[0,L)^d}\nabla\psi\cdot \mathbb{A}({\bf e}_i+\nabla\phi_i)=0,
\end{align}
where ${\bf e}_i$ denotes the unit vector in direction $i$. If $X$ contains the constant functions
(as would be the case for the Finite Element space), $\phi_i$ has to be normalized to be unique, e.g.
by imposing $\int_{[0,L)^d}\phi_i=0$, but this should be irrelevant since we are only interested in
$\nabla\phi_i$. If $X$ is indeed a Finite Element space, and if 
$\{\psi_\alpha\}_{\mbox{nodes}\;\alpha}$ denotes 
the standard ``H\"utchen'' basis then the (stiffness) matrix $A=[a_{\alpha\beta}]$ and the 
right hand side ${\bf f}=[f_\alpha]$ are given by
\begin{align}
a_{\alpha\beta}=\int_{[0,L)^d}\nabla\psi_\alpha\cdot \mathbb{A}\nabla\psi_\beta
\quad\mbox{and}\quad f_\alpha=-\int_{[0,L)^d}\nabla\psi_\alpha\cdot \mathbb{A} {\bf e}_i.
\end{align}
Here it is important to treat periodicity correctly: In practice, one identifies functions on $[0,L)^d$
with functions on $\mathbb{R}^d$ that are periodic in each (Cartesian) argument of period $L$,
hence if the node $\alpha$ is such that one of the adjacent triangles crosses the boundary of 
the periodic cell $[0,L)^d\subset\mathbb{R}^d$, then there is a piece of $\phi_\alpha$ 
that appears on the other side.
If a quadrature rule is used for computing the stiffness matrix, it is important 
that the same one is used for approximation of the right-hand side.


Let us consider the $d\times d$ matrix $\bar{\mathbb{A}}_L=[\bar a_{L,ij}]=\bar{\mathbb{A}}_L(\mathbb{A})$ 
defined through (see also (\ref{eqn:A_hom_m}))
\begin{align}\label{wg01}
\bar a_{L,ij}:= {\bf e}_j\cdot\int_{[0,L)^d}\mathbb{A}({\bf e}_i+\nabla\phi_i),
\end{align}
(where again, the same quadrature rule should be used). Then we have for every realization
\begin{align}\label{wg02}
 \bar{\mathbb{A}}_L\;\;\mbox{is symmetric, i.e.}\;\; \bar a_{L,ij}=\bar a_{L,ji}.
\end{align}


Let us consider the ensemble average $\langle \bar{\mathbb{A}}_L \rangle_L$, 
which by the law of large numbers is given by (see also (\ref{eqn:A_hom_Ensemble}))
\begin{align}
\langle \bar{\mathbb{A}}_L\rangle_L=\lim_{N\uparrow\infty}\frac{1}{N}\sum_{n=1}^N   \bar{\mathbb{A}}_L^{(n)},
\end{align}
almost surely,
where $\bar{\mathbb{A}}_L^{(n)}$ come via (\ref{wg01}) from independent realizations 
$\mathbb{A}=\mathbb{A}^{(n)}$ according to the distribution $\langle\cdot\rangle_L$.
Suppose that the finite-dimensional space $X$ is invariant under reflections in the coordinate directions
in the sense of (\ref{wg07}) below. This imposes a more serious restriction on the Finite Element space,
namely that it is based on a subdivision of the torus $[0,L)^d$ into axi-parallel 
cubes (instead of triangles) and that 
the function space on each cube is spanned by functions that are multi-linear in the 
Cartesian coordinates (as opposed
to affine). If this condition is satisfied, then we have
\begin{align}\label{wg04}
\langle \bar{\mathbb{A}}_L\rangle_L \;\mbox{is isotropic, i.e.}\;
\langle \bar a_{L,ij}\rangle_L=\lambda_L\delta_{ij}
\end{align}
for some $\lambda_L\in(0,L)$.


We are interested in the covariances of the entries of $\bar{\mathbb{A}}_L$, and note that
by the law of large numbers
\begin{align*}
\lefteqn{{\rm cov}_{\langle\cdot\rangle_L}[\bar a_{L,ij},\bar a_{L,i'j'}]
:=\big\langle(\bar a_{L,ij}-\langle \bar a_{L,ij}\rangle_L)(\bar a_{L,i'j'}-\langle 
\bar a_{L,i'j'}\rangle_L)\big\rangle_L}
\nonumber\\
&=\lim_{N\uparrow\infty}\frac{1}{N-1}\sum_{n=1}^N\big(\bar a_{L,ij}^{(n)}-\frac{1}{N}\sum_{m=1}^N\bar a_{L,ij}^{(m)}\big)
\big(\bar a_{L,i'j'}^{(n)}-\frac{1}{N}\sum_{m'=1}^N \bar a_{L,i'j'}^{(m')}\big).
\end{align*}
More precisely, we are interested in its rescaled version
\begin{align*}
\bar Q_{L,iji'j'}:=L^d{\rm cov}_{\langle\cdot\rangle_L}[\bar a_{L,ij},\bar a_{L,i'j'}]
\end{align*}
which is easier to understand as the four-linear form
\begin{align*}
\bar Q_{L}(\eta,\xi,\eta',\xi'):=
L^d{\rm cov}_{\langle\cdot\rangle_L}[\eta\cdot \bar{\mathbb{A}}_L \xi,\eta'\cdot \bar{\mathbb{A}}_L\xi'].
\end{align*}
We claim that it has the invariance property
\begin{align}\label{wg13}
\bar Q_{L}(R\eta,R\xi,R\eta',R\xi')=\bar Q_{L}(\eta,\xi,\eta',\xi').
\end{align}
In the case of $d=2$, this implies that $\bar Q_L$ is just characterized by three different numbers:
\begin{align}
\bar Q_L(e_1,e_1,e_1,e_2)&=\bar Q_L(e_1,e_1,e_2,e_1)=\nonumber\\
\bar Q_L(e_1,e_2,e_1,e_1)&=\bar Q_L(e_2,e_1,e_1,e_1)=0,\label{wg16}\\
\bar Q_L(e_1,e_2,e_2,e_2)&=\bar Q_L(e_2,e_1,e_2,e_2)=\nonumber\\
\bar Q_L(e_2,e_2,e_1,e_2)&=\bar Q_L(e_2,e_2,e_2,e_1)=0,\label{wg17}\\
\bar Q_L(e_1,e_2,e_1,e_2)&=\bar Q_L(e_1,e_2,e_2,e_1)=\nonumber\\
\bar Q_L(e_2,e_1,e_1,e_2)&=\bar Q_L(e_2,e_1,e_2,e_1),\label{wg14}\\
\bar Q_L(e_1,e_1,e_2,e_2)&=\bar Q_L(e_2,e_2,e_1,e_1),\label{wg15}\\
\bar Q_L(e_1,e_1,e_1,e_1)&=\bar Q_L(e_2,e_2,e_2,e_2).\label{wg18}
\end{align}

\medskip

{\sc Argument for (\ref{wg02})}. According to (\ref{wg03}), definition (\ref{wg01}) may be reformulated as
\begin{align*}
\bar a_{L,ij}=\int_{[0,L)^d}({\bf e}_j+\nabla\phi_j)\cdot \mathbb{A} ({\bf e}_i+\nabla\phi_i),
\end{align*}
so that the symmetry of $\mathbb{A}$ yields the symmetry of $\bar{\mathbb{A}}_L$.

\medskip

{\sc Argument for (\ref{wg04})}. Identifying the points on the torus with $[0,L)^d\subset\mathbb{R}^d$,
let ${\mathcal G}$ denote the subgroup of the orthogonal group that leaves $[0,L)^d$ invariant.
According to our assumption, for any $R\in{\mathcal G}$,
\begin{align}\label{wg05}
R^t \mathbb{A}(R\cdot)R\;\mbox{and}\;\mathbb{A}\;\mbox{have the same distribution under}\;\langle\cdot\rangle_L,
\end{align}
where $R^t \mathbb{A}(R\cdot)R$ denotes the matrix field $[0,L)^d\ni x\mapsto R^t \mathbb{A}(R x)R$. 
According to our assumption on $X$ we have
\begin{align}\label{wg07}
\psi \in X\;\Longrightarrow\;\psi (R\cdot)\in X,
\end{align}
where $\psi(R\cdot)$ denotes the function $[0,L)^d\ni x\mapsto \psi(Rx)$. 

\smallskip

For a fixed vector $\xi\in\mathbb{R}^d$, we consider $\phi_\xi:=\xi_i\phi_i$ (Einstein's summation convention)
and note that in view of (\ref{wg03}), for given realization $\mathbb{A}=\mathbb{A}^{(n)}$,
the function $\phi_\xi=\phi_\xi(\mathbb{A})$ (at least up to additive constants) is characterized by 
\begin{align}\label{wg08}
\forall\;\psi \in X\quad\int_{[0,L)^d}\nabla\psi\cdot \mathbb{A}(\xi+\nabla\phi_\xi)=0,
\end{align}
We now argue that $\phi$ transforms under $R\in{\mathcal G}$ as follows
\begin{align}\label{wg10}
\phi_{R\xi}(\mathbb{A};Rx)=\phi_\xi(R^t \mathbb{A} (R\cdot)R;x).
\end{align}
Indeed, this relies on the straightforward orthogonal transformation rule
\begin{align*}
\lefteqn{\int_{[0,L)^d}\nabla_y[\psi (Ry)]\cdot \mathbb{A}(y)(R\xi+\nabla\phi_{R\xi}(y))dy}\nonumber\\
&\stackrel{y=Rx}{=}\int_{[0,L)^d}\nabla\psi(x)\cdot R^t \mathbb{A}(Rx)R(\xi+\nabla_x[\phi_{R\xi}(Rx)])dx.
\end{align*}
According to (\ref{wg07}) and (\ref{wg08}) (with $\xi$ replaced by $R\xi$) the left-hand side
vanishes for all $\psi \in X$;
hence by the characterization (\ref{wg08}) applied to the right-hand side, we obtain (\ref{wg10}).

\smallskip

We now argue note that from (\ref{wg10}) we obtain for the gradient
$\nabla\phi_{R\xi}(\mathbb{A};Rx)=\nabla\phi_\xi(R^t \mathbb{A}(R\cdot)R;x)$ and thus for the 
flux $q_\xi(\mathbb{A};x):= \mathbb{A}(\xi+\nabla\phi_\xi(\mathbb{A};x))$ the transformation rule
\begin{align*}
q_{R\xi}(\mathbb{A};Rx)=Rq_\xi(R^t \mathbb{A}(R\cdot)R;x),
\end{align*}
from which we obtain by definition (\ref{wg01}) that
\begin{align}\label{wg19}
\bar{\mathbb{A}}_L(\mathbb{A})R\xi=R \bar{\mathbb{A}}_L(R^t \mathbb{A}(R\cdot)R)\xi.
\end{align}
According to (\ref{wg05}) this yields the following invariance property for the symmetric
matrix $\langle \bar{\mathbb{A}}_L\rangle_L$
\begin{align*}
\langle \bar{\mathbb{A}}_L\rangle_L R\xi=R\langle  \bar{\mathbb{A}}_L \rangle_L\xi.
\end{align*}
Since this holds for all $\xi\in\mathbb{R}^d$ and all $R\in{\mathcal G}$, by an argument of elementary algebra,
we obtain the isotropy of $\langle \bar{\mathbb{A}}_L \rangle_L$, cf (\ref{wg04}).

\medskip

{\sc Argument for (\ref{wg13})}. This follows from (\ref{wg19}) in form of 
$$
(R\eta)\cdot \bar{\mathbb{A}}_L (\mathbb{A})R\xi=\eta\cdot \bar{\mathbb{A}}_L(R^t \mathbb{A}(R\cdot)R)\xi
$$
and from (\ref{wg05}).

\medskip

{\sc Argument for (\ref{wg18})-(\ref{wg16})}. The four identities in (\ref{wg14}) on 
the variances just follow from 
the symmetry of the underlying random variable $\bar{\mathbb{A}}_L$, c.f. ~(\ref{wg02}), 
in form of 
$$
{\bf e}_1\cdot\bar{\mathbb{A}}_L {\bf e}_2={\bf e}_2\cdot\bar{\mathbb{A}}_L {\bf e}_1.
$$ 
The identity (\ref{wg15}) follows from the symmetry of the covariance in its two arguments.
The vanishing of the eight entries stated in (\ref{wg16}) and (\ref{wg17}) follows from (\ref{wg13}) 
applied to the reflection $R\in{\mathcal G}$ given by $R {\bf e}_1=-{\bf e}_1$ and 
$R {\bf e}_2={\bf e}_2$. The identity (\ref{wg18})
follows from (\ref{wg13}) applied to the reflection $R\in{\mathcal G}$ given by 
$R {\bf e}_1={\bf e}_2$ and $R{\bf e}_2={\bf e}_1$.

\section{Numerical study of stochastic homogenization }
\label{sec:Homo_Numer}

In this section, we estimate numerically the mean constant coefficient in 
 the system (\ref{eqn:scaled_setting})
 depending on $L$ and other model parameters at the limit of $N \to \infty$, 
 see \cite{GlOtto:12,GlOtto:16} for the respective problem setting.

 \begin{figure}[htb]
\centering
\includegraphics[width=5.2cm]{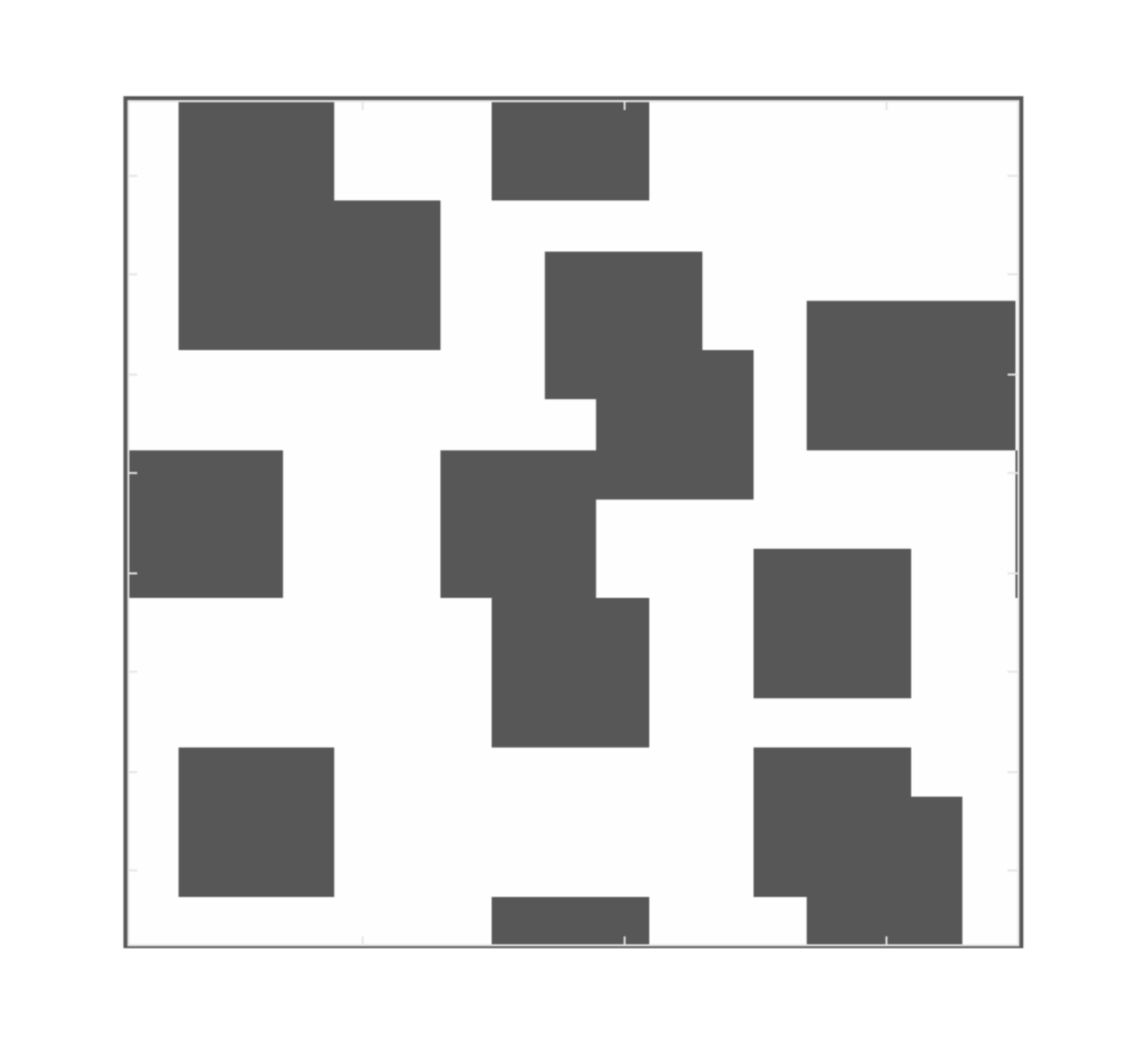}
\includegraphics[width=5.2cm]{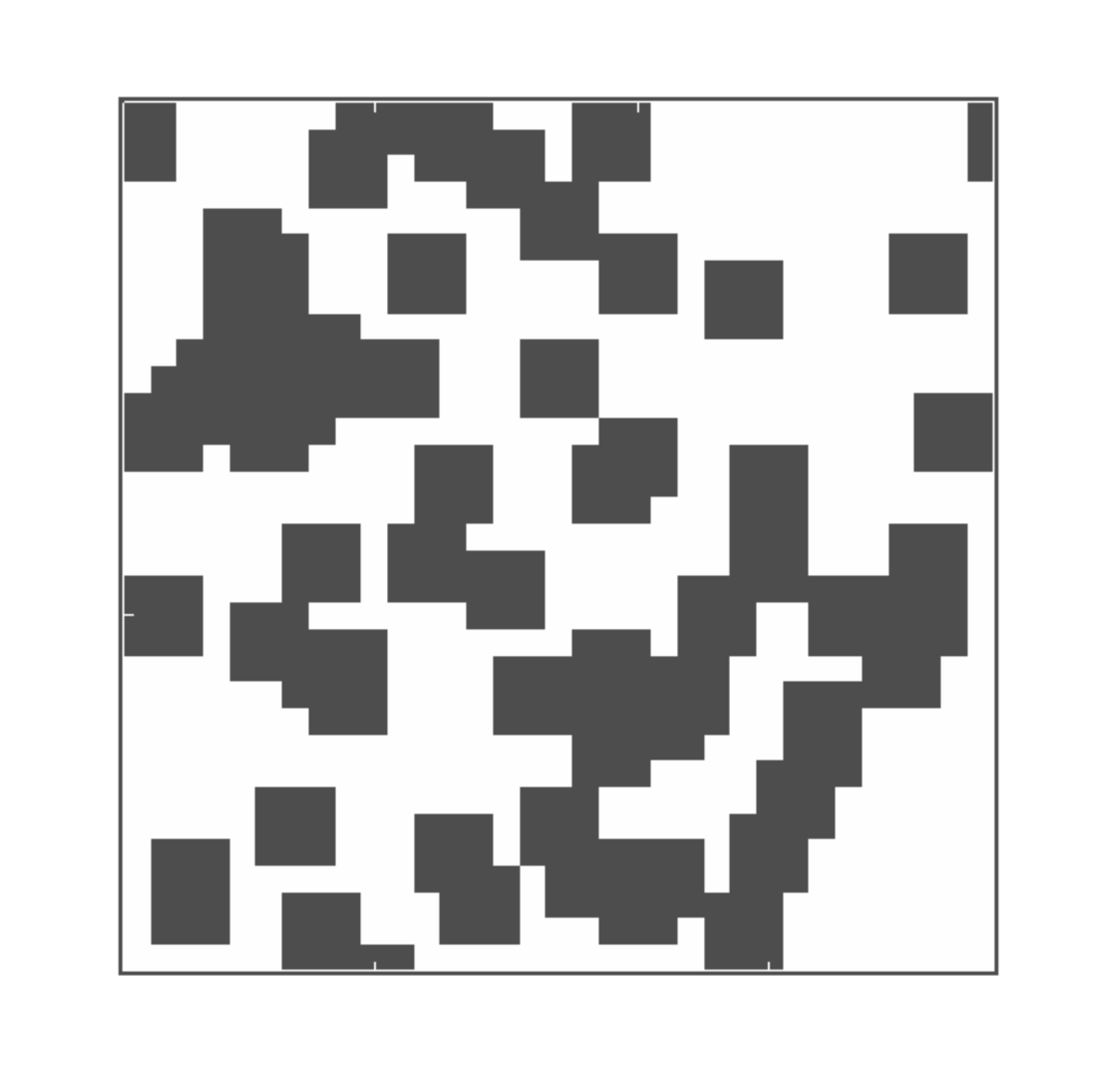}
\includegraphics[width=5.2cm]{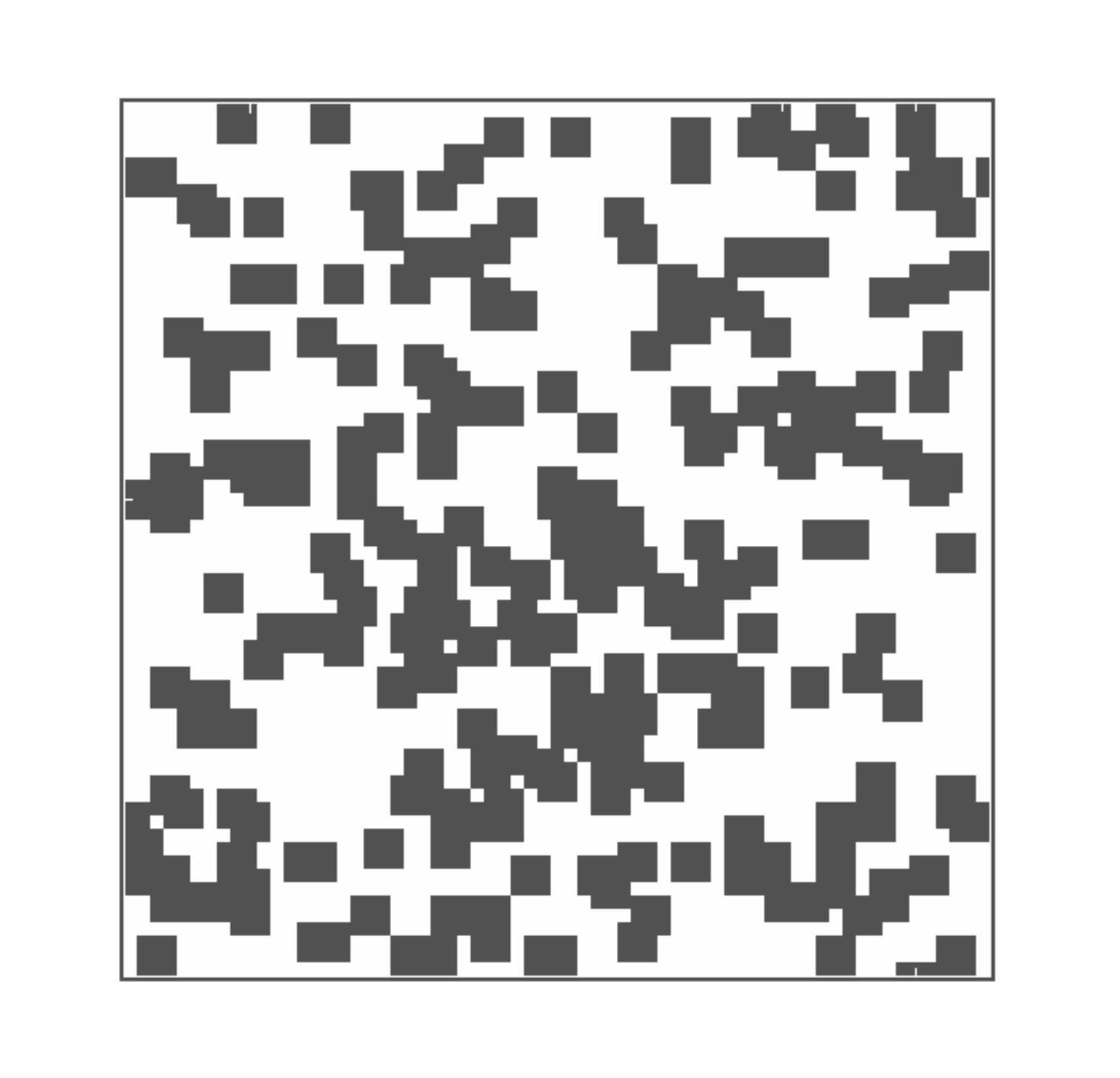}\\
\includegraphics[width=5.2cm]{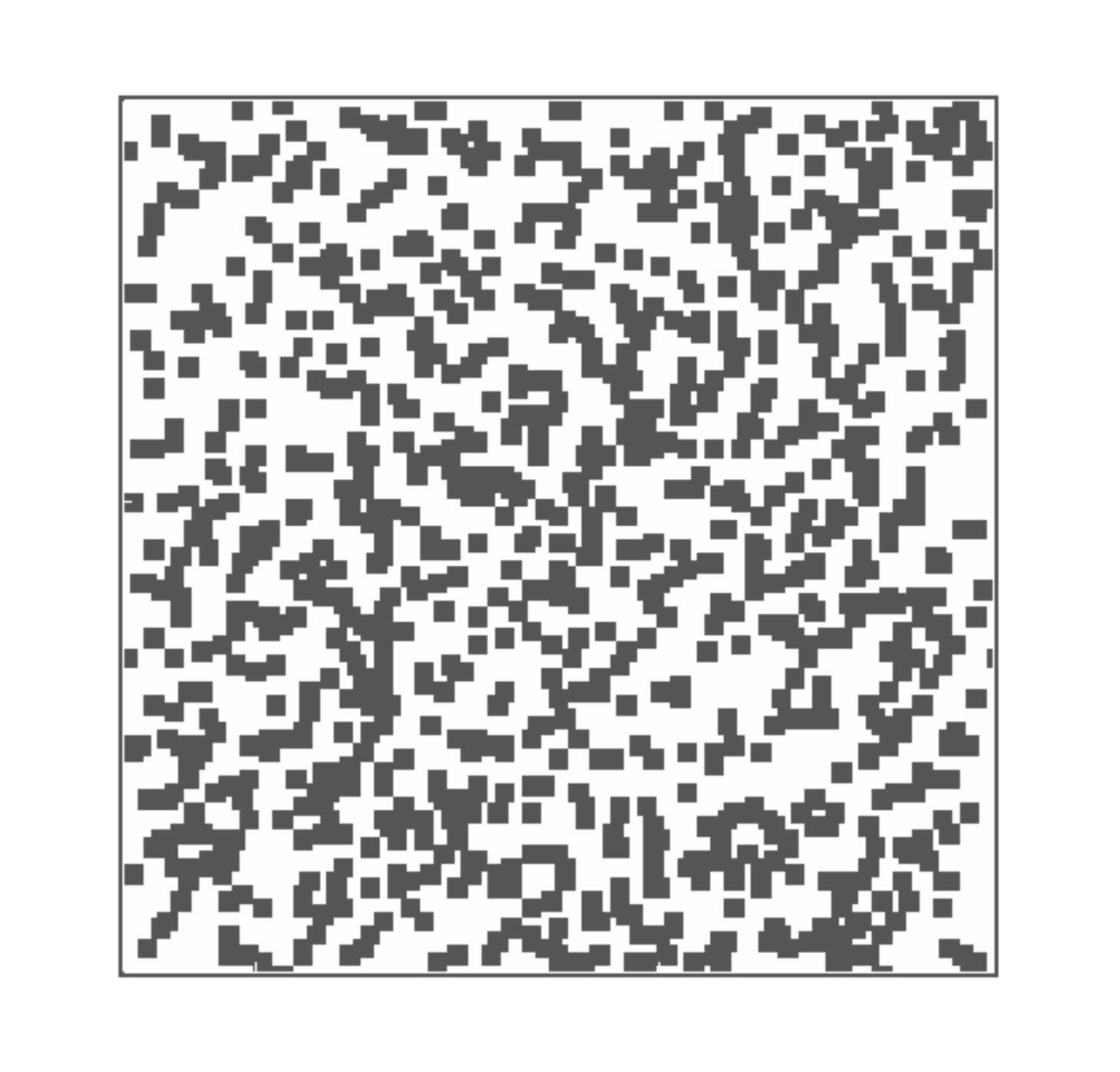}
\includegraphics[width=5.2cm]{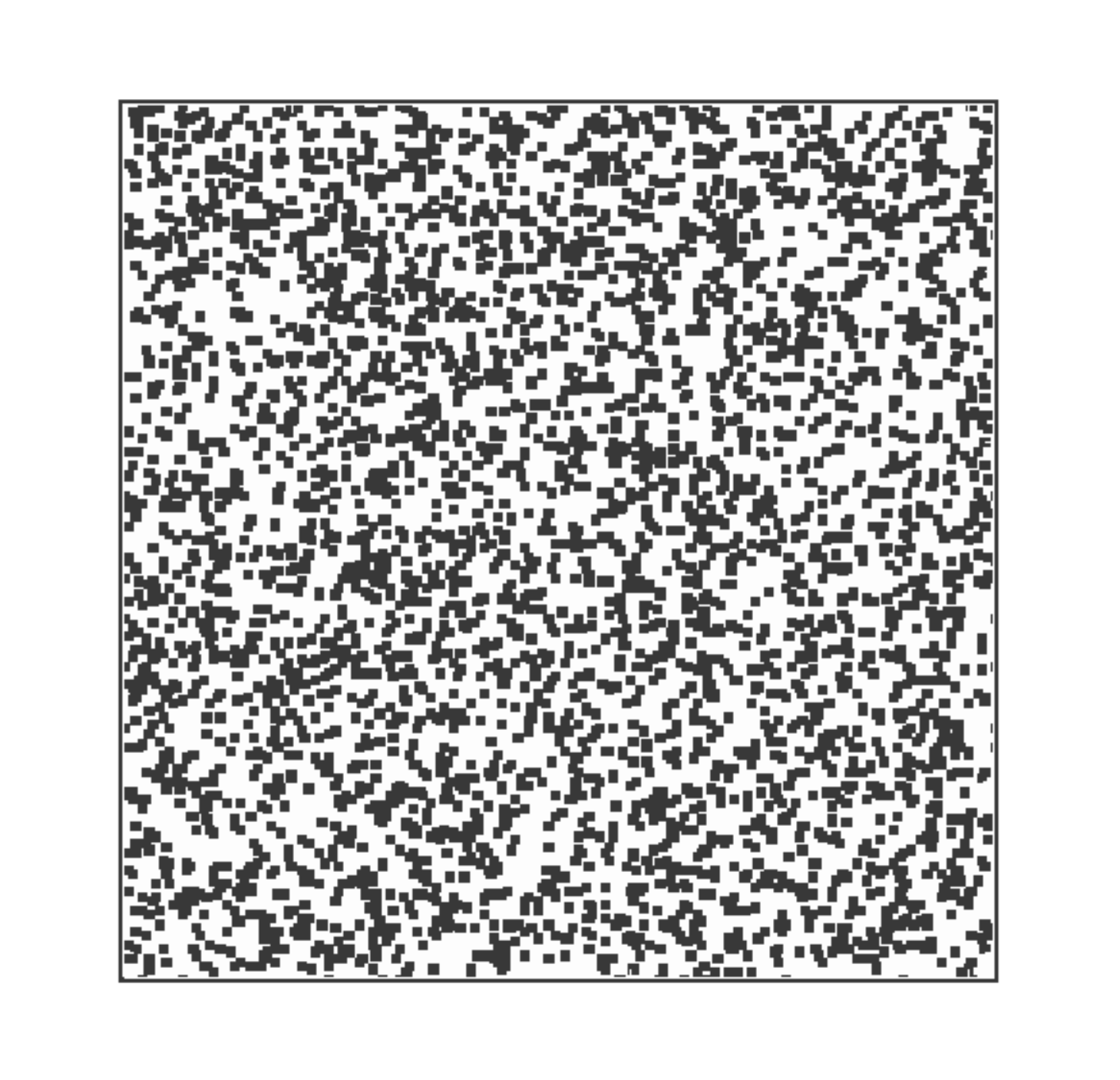}
\includegraphics[width=5.2cm]{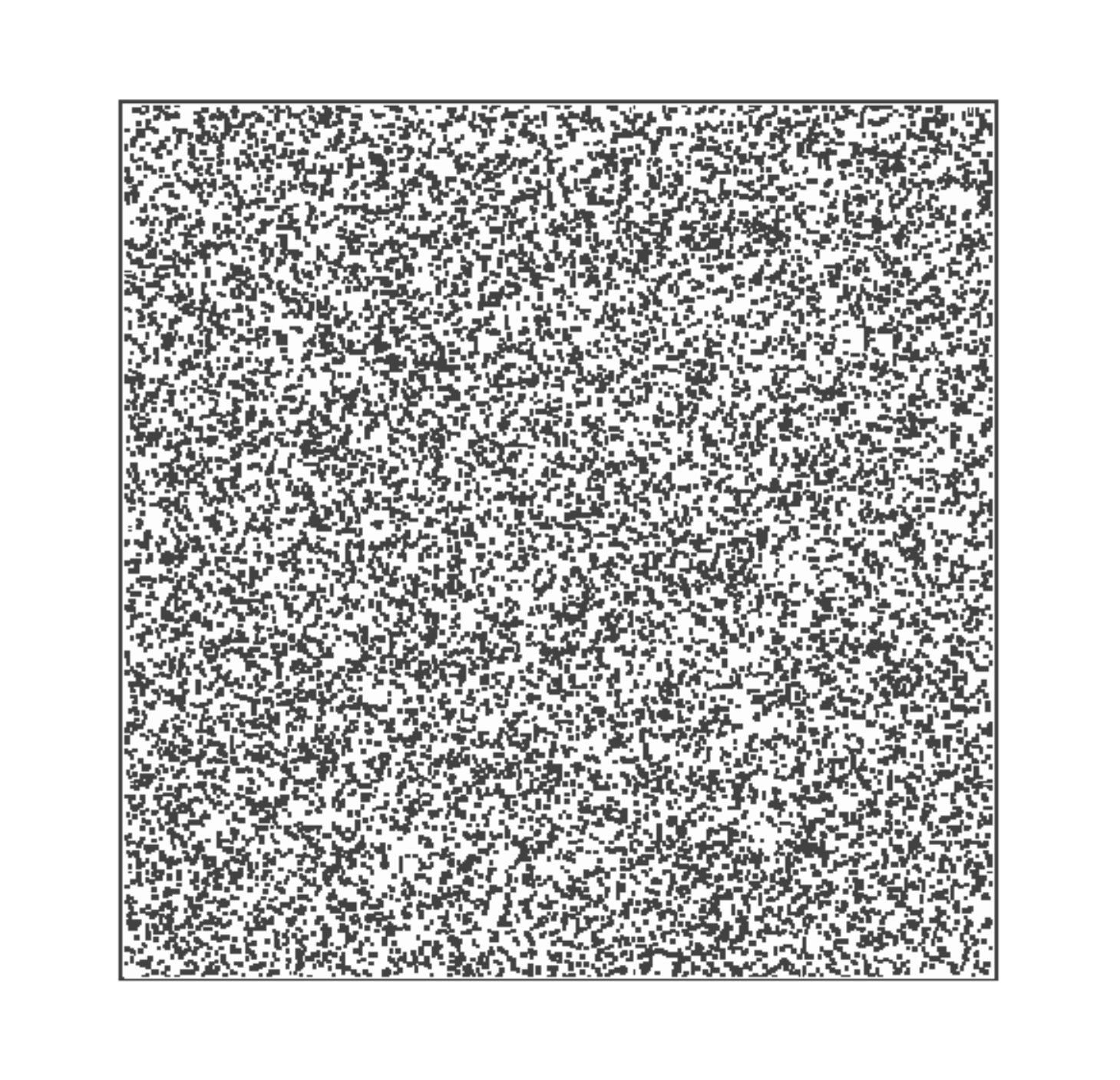}
\caption{\small Realization examples of a stochastic process with $L^2$ 
overlapping cells for $L=4,8,16$ (top) 
and $L=32,64,128$ (bottom), $\alpha=1/4$.}
\label{fig:Clust_vsL}
\end{figure}
Recall that the homogenization problem is solved in the unit square $\Omega=[0, 1]^2$ 
with the grid size $m_s\times m_s $, where $m_s = m_0 \, L + 1$. 
Due to tensor-based construction of the stiffness matrix and 
sparse representation of matrix entities, in our  numerical experiments using MATLAB, 
the largest number of generated homogenization cells in the domain $\Omega$ reaches the value 
up to $L^2=128^2$.  
It corresponds to the problem (vector) size  $263169$  ($m_s=513$ with $m_0=4$). 

Figure \ref{fig:Clust_vsL} illustrates examples of distributions of $L^2$ 
randomly located (overlapping) cells specifying the equation coefficient in the cases of 
moderate and large size of the \emph{representative volume elements} (RVE) for $L=4,\, \ldots,\, 128$,
used in the study of asymptotic of  empirical variance/average versus the size of the  RVE, $L$.
 
\begin{table}[tbh]
\begin{center}\footnotesize
\begin{tabular}
[c]{|r|r|r|c|c|c|}%
\hline
$L^2$ &   $m$/$m^2$ &   matrix   & RHS    & PCG time   \\
\hline  
$4^2$ &    17/289     & 0.012     & 0.01    &  0.006    \\
\hline
$8^2$ &    33/1089      & 0.06    & 0.045   & 0.137      \\
\hline
$16^2$ &   65/4225      & 0.34    & 0.19    & 0.11    \\
\hline
$32^2$ &   129/16641    & 3.0     & 0.8     &  0.5     \\
\hline
$64^2$ &   257/66049     & 36     & 3.7     & 2.6       \\
\hline
$128^2$ &  513/263169   & 561     & 22      & 13.8      \\
\hline
 \end{tabular}
\caption{ \small  CPU times (sec) versus the number of inclusions (i.e., $L^2$) 
for generating the stiffness matrix, the right-hand side, 
and for the solution of the discretized system for the case of overlapping inclusions.
Tolerance $\varepsilon=10^{-8}$.
}
\label{Tab:Times_2D}
\end{center}
\end{table}

  Table \ref{Tab:Times_2D} presents the CPU times for generating the stiffness matrix, the right-hand side (RHS), 
and for the solution of the discretized system for the case of overlapping inclusions, for 
tolerance $\varepsilon=10^{-8}$. Number of inclusions ($L^2$) varies from $16$ to $16384$.
The latter is computed on a mesh of size $513\times 513$.
We observe that matrix generation takes the dominating time.

 \subsection{Systematic error  and empirical variance versus  $L$}

In what follows, we numerically check the theoretical convergence rate (\ref{eqn:CLT_RVE}), 
in form of checking (\ref{eqn:var}) and (\ref{eqn:sysErr}) separately.
%

Figure \ref{fig:sys_a11_vsL} serves to illustrate the asymptotic convergence of the
systematic error  see (\ref{eqn:sysErr}), at the limit of large $L$. 
Since we do not have access to the ensemble averages $\langle \bar{\mathbb{A}}_L\rangle_L$, 
we take empirical averages $ \bar{\mathbb{A}}_L^{N} $ for large enough $N$, 
(cf. (\ref{eqn:A_hom})) as a proxy. Furthermore, 
due to the fact that $\mathbb{A}_{\mbox{\footnotesize hom}}$ is not computable 
we compare the differences in $\langle \bar{a}_{L,11}\rangle_L$ computed on
a sequence of increasing values of $L$.

\begin{figure}[htb]
\centering
\includegraphics[width=7.2cm]{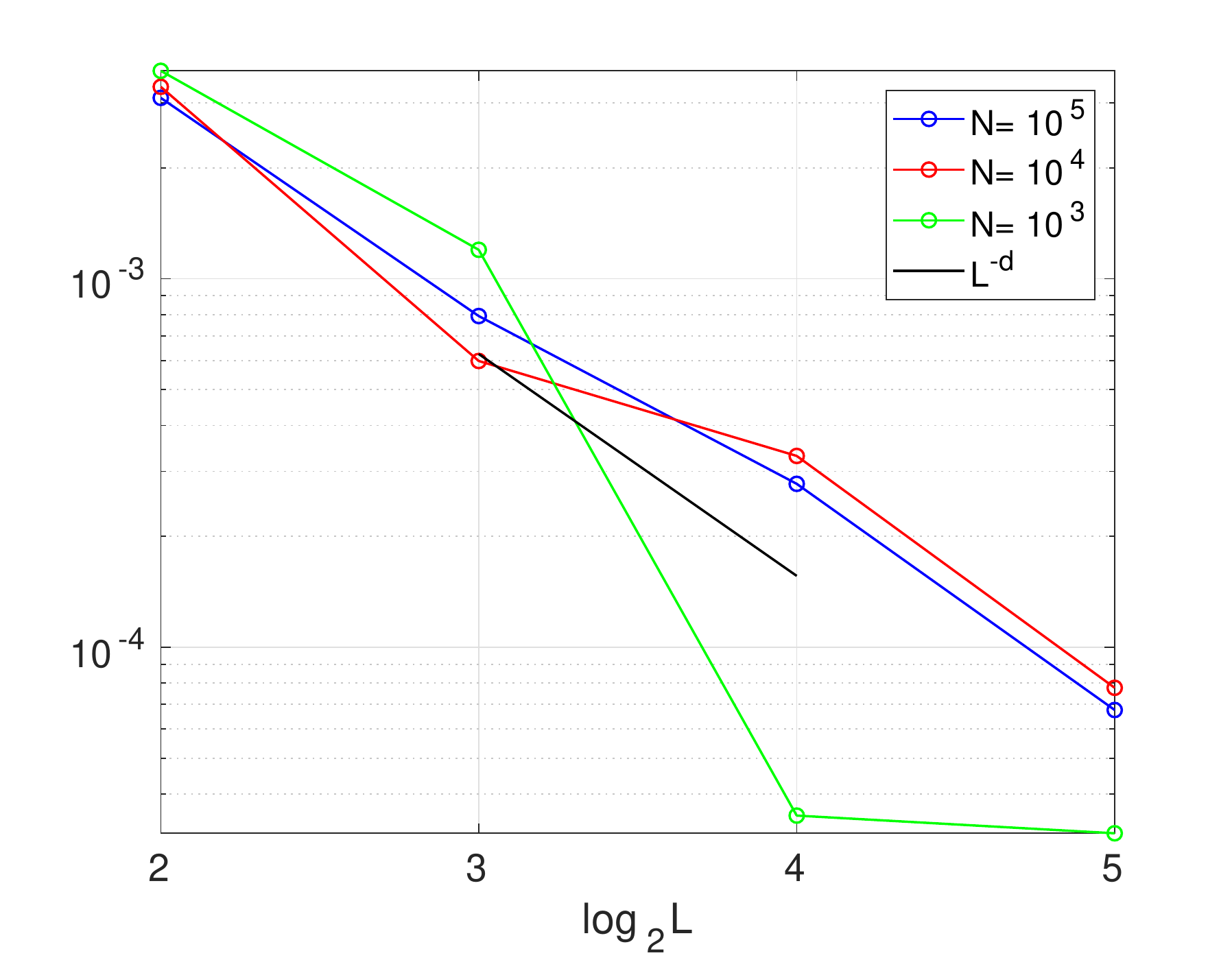}
 \caption{\small Systematic error 
 $\langle \bar{a}_{L,11}\rangle_L - \langle \bar{a}_{2L,11}\rangle_{2L}$  vs. $L$, 
 for increasing $L=2^p$, $ p=1,\; 2,\, \ldots, 5$ computed 
   for the largest number of realizations $N=10^5$. 
 }
\label{fig:sys_a11_vsL}
\end{figure}
 
Figure \ref{fig:sys_a11_vsL} shows the differences in matrix entries
$\langle \bar{a}_{L,11}\rangle_L - \langle \bar{a}_{2L,11}\rangle_{2L}$ 
for increasing sizes of the RVE, i.e., for $L=2^p$, $ p=1,\; 2,\, \ldots, 5$,  
  computed with $N=10^{5}$, $N=10^{4}$ and $N=10^{3}$  
stochastic realizations. 
 It  illustrates the asymptotic
convergence of the \emph{systematic error}, see (\ref{eqn:sysErr}),
\[  
  \left| \langle \bar{\mathbb{A}}_L \rangle_L - 
  \mathbb{A}_{\mbox{\footnotesize hom}}\right|  \lesssim L^{-d} \log^d L
\]
at the limit of large $L$.    Calculations are performed with $m_0=4$,  
 $\alpha=\frac{1}{4}$ and $\lambda=0.4$ and tolerance $\varepsilon=10^{-8}$. 
The black line corresponds to the curve $L^{-d}$, with $d=2$.   

  The largest size of RVE  
with $p=\log_2 L =5$, presented in statistics in Figure \ref{fig:sys_a11_vsL}, 
corresponds to the most left picture in the bottom row in Figure \ref{fig:Clust_vsL}.
In this example the jumping coefficient contains $32^2$ (overlapping) inclusions, 
and the discrete problem of size $m_s^2=129^2$ (i.e., vector  size is $16641$) 
has been solved $N=10^5$ times for providing the representative statistics.  
For readers convenience, Table \ref{Tab:a_sys_er_L1} presents the same data
 visualized in Figure \ref{fig:sys_a11_vsL}. 
 
\begin{table}[htb]
\begin{center}%
{\footnotesize
\begin{tabular}
[c]{|r|r|r|r| }%
\hline
 L / N &    $ 10^5 $ &  $10^4$ & $10^3$ \\
 \hline
 4  & 0.003095    & 0.003316  & 0.003665  \\
  \hline
 8 &  0.000792    & 0.000598  & 0.001198  \\
     \hline
 16 &  0.000277   & 0.000330  & -0.000034  \\
      \hline
32 &   0.000067   & 0.000077  &  0.000031  \\
  \hline
  \end{tabular}
\caption{\small Systematic error $\langle \bar{a}_{L,11}\rangle_L - \langle \bar{a}_{2L,11}\rangle_{2L}$  vs. $L$, 
 for increasing $L=2^p$, $ p=1,\; 2,\, \ldots, 5$, computed for $N=10^5,10^4$ and $10^3$ 
realizations. 
}
\label{Tab:a_sys_er_L1}
}
\end{center}
\end{table}
 
We now turn to the random error, i.e., the variance of $\langle \bar{\mathbb{A}}_L \rangle_L = [\bar{a}_{ij}]$.
Since by symmetry considerations, 
$\langle \bar{a}_{11} \rangle_L = \langle \bar{a}_{22}\rangle_L$ and
$\langle \bar{a}_{12}\rangle_L =0$, we monitor 
$$
\langle(\bar{a}_{L,11} - \bar{a}_{L,22})^2\rangle^{1/2}_L \quad  \mbox{and} \quad
\langle(\bar{a}_{L,12})^2\rangle^{1/2}_L,
$$
which should decay as $1/L$.
Again, since we do not have access to the ensemble averages defining the standard deviation, 
we replace them by their empirical approximation for large enough $N$,
 \begin{align}\label{eqn:Var1}
  \left( \frac{1}{N}\sum\limits_{n=1}^N (\bar{a}_{L,12}^{(n)})^2\right)^{1/2}
   \approx \; &
\langle(\bar{a}_{L,12})^2\rangle^{1/2}_L
   \leq C_1 L^{-d/2}, \\
   \label{eqn:Var2}
\left(\frac{1}{N}\sum\limits_{n=1}^N (\bar{a}_{L,11}-\bar{a}_{L,22})^2\right)^{1/2}
  \approx \; & \langle(\bar{a}_{L,11} -\bar{a}_{L,22})^2\rangle^{1/2}_L  \leq C_1 L^{-d/2}.
 \end{align}
 
 \begin{figure}[htb]
\centering
\includegraphics[width=7.2cm]{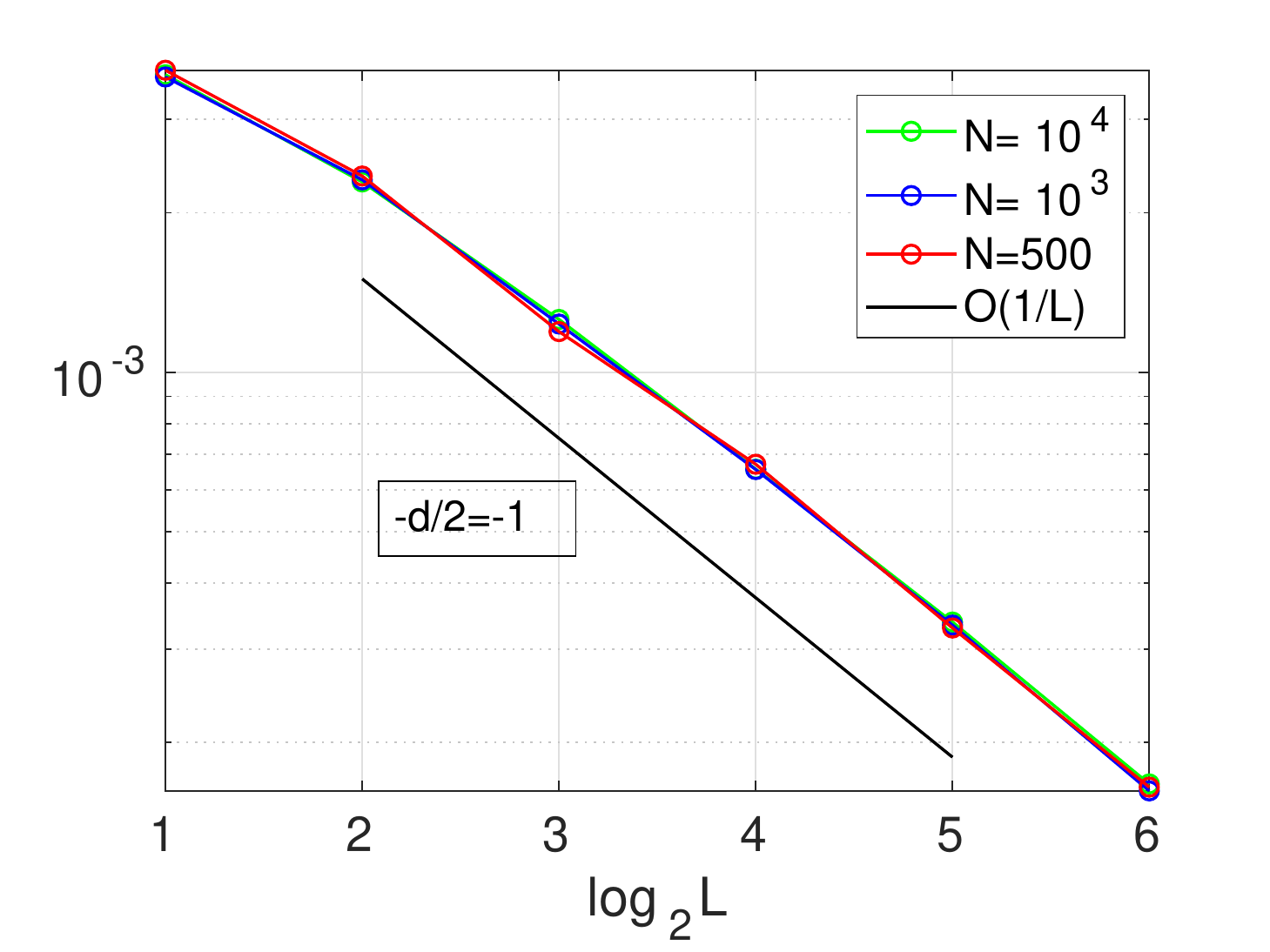}
 \includegraphics[width=7.2cm]{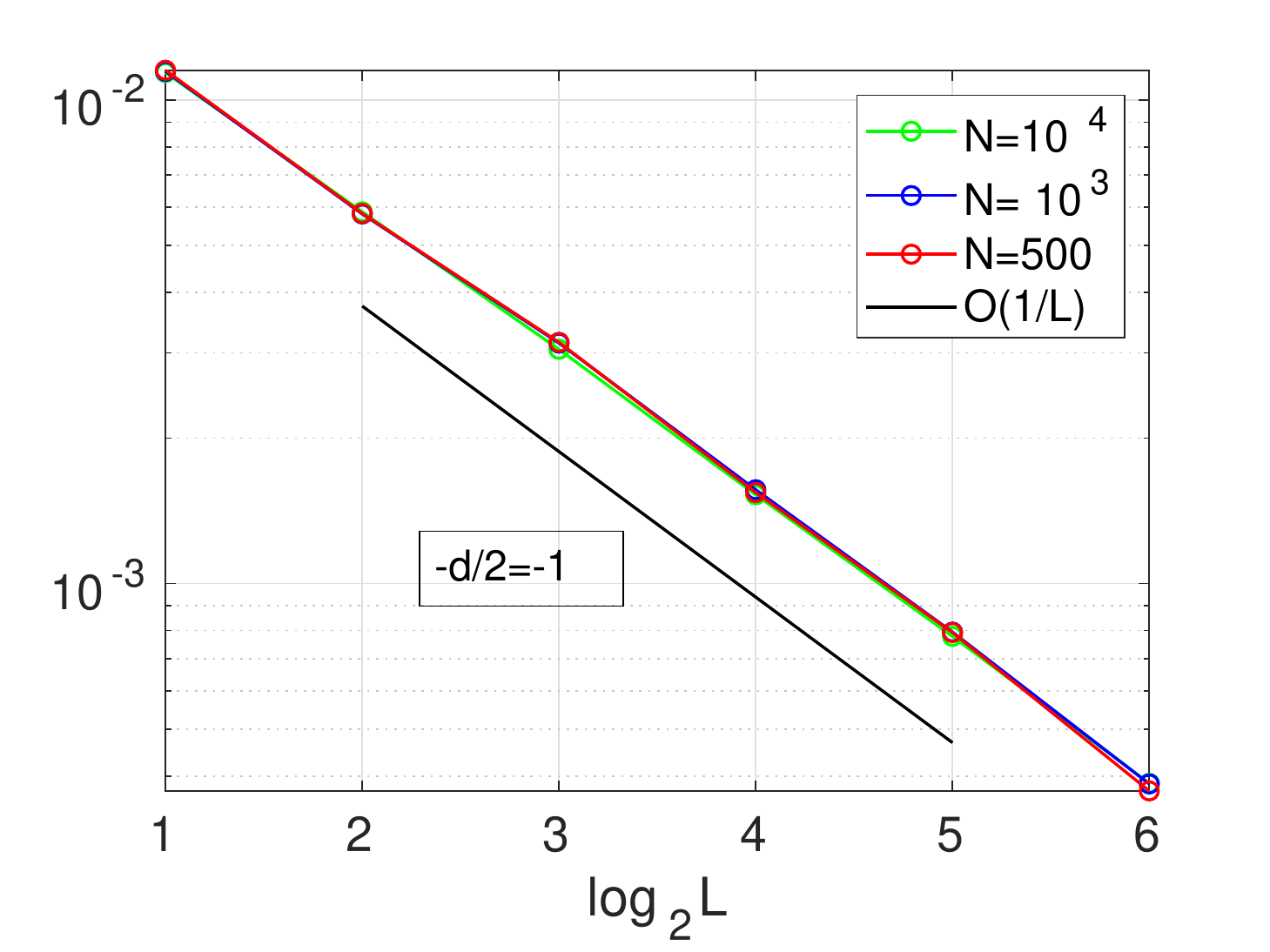}
\caption{\small Standard deviation of $\bar{a}_{L,12}$ (left) and of
$(\bar{a}_{L,11} - \bar{a}_{L,22})$ (right) versus $L$, with 
$L=2^p$, $ p=1,\; 2,\, \ldots, 6$,  for $N=500, 10^3$ and $10^4$.}
\label{fig:EmpEvarA12_vsL}
\end{figure}
Figure \ref{fig:EmpEvarA12_vsL} presents the empirical average (standard deviation) 
 for  $\bar{a}_{L,12}$ and $\bar{a}_{L,11} - \bar{a}_{L,22}$ vs.
$L=2,\, 4,\, \ldots,\, 64$, corresponding to $N=500, 10^3$ and $10^{4}$ 
realizations ($\alpha=\frac{1}{4}$, $\lambda=0.4$), confirming the estimates (\ref{eqn:Var1}) and
(\ref{eqn:Var2}).  Notice that starting from $N=500$ the values of empirical average
 for different number of realizations practically coincide.
The results  for the homogenized matrix for the largest $p=6$ presented in 
Figure \ref{fig:EmpEvarA12_vsL} correspond to ensembles with $4096$ overlapping cells, 
and the size of the discrete problem  (i.e., vector/matrix size) is $66049$.
These systems of equations have been solved $10^4$ times. 
An example of realization for $p=6$ is shown in Figure \ref{fig:Clust_vsL} (middle bottom panel). 
   \begin{table}[htb]
\begin{center}%
{\footnotesize
\begin{tabular}
[c]{|r|r|r||r|r| }%
\hline
   & \multicolumn{2}{|c|}{$\bar{a}_{L,12}$} & \multicolumn{2}{|c|}{$\bar{a}_{L,11}- \bar{a}_{L,22}$} \\
   \hline
 L / N & $ 10^4 $ &  $500$ & $10^4$ & $500$ \\
 \hline 
   2  & 0.003643    & 0.003716  & 0.011402 & 0.011531  \\
  \hline
 4  & 0.002287    & 0.002346  & 0.005875 & 0.005828  \\
  \hline
 8 &  0.001258    & 0.001193  & 0.003052 &  0.003156 \\
     \hline
 16 &  0.000656   & 0.000670  & 0.001527 &  0.001543 \\
      \hline
 32 &  0.000337   & 0.000329  & 0.000778 &  0.000792 \\
 \hline
 64 &  0.000167   & 0.000165  & 0.000386 &  0.000372 \\
  \hline
  \end{tabular}
\caption{\small Standard deviation of $\bar{a}_{L,12}$ (left) and
$\bar{a}_{L,11} - \bar{a}_{L,22}$ (right) versus $L$, with 
$L=2^p$, $ p=1,\; 2,\, \ldots, 6$,  for $N=500$ and $N=10^4$.} 
\label{Tab:a_stan_dev_L1}
}
\end{center}
\end{table}
 
 Related to Figure \ref{fig:EmpEvarA12_vsL},
Table \ref{Tab:a_stan_dev_L1} presents standard deviation of $\bar{a}_{L,12}$ (left) and
$\bar{a}_{L,11} - \bar{a}_{L,22}$ (right) versus $L$, with 
$L=2^p$, $ p=1,\; 2,\, \ldots, 6$,  for $N=500$ and $N=10^4$. 
We choose the following discretization and model parameters
 $m_0=4$,  $\varepsilon=10^{-8}$, $\alpha=\frac{1}{4}$, and $\lambda=0.4$.
 
 We summarize that numerical results presented in Figures \ref{fig:sys_a11_vsL} and 
 \ref{fig:EmpEvarA12_vsL} (see also Tables \ref{Tab:a_sys_er_L1} and \ref{fig:EmpEvarA12_vsL})
 confirm the asymptotic convergence rates of the systematic error (\ref{eqn:sysErr}) and 
 the empirical average (\ref{eqn:Var1}), (\ref{eqn:Var2}) in the size $L$ of RVE.

\subsection{The asymptotic of quartic tensor vs. leading order variances}
\label{ssec:Quartic_tensQ}

In this section, we consider the convergence of the \emph{quartic tensor} $\bar{ Q}_L$, 
representing \emph{covariances} of the matrix $\bar{\mathbb{A}}_L$, to its 
leading order variances ${Q}_{\mbox{\footnotesize hom}}$, see Section \ref{ssec:Symmetr_Quartic_tensQ}. 
For the large number of realizations $N$,
the computable approximation, 
$\bar{ Q}_{L}^N \in \mathbb{R}^{2\times 2 \times 2 \times 2}$, 
to the scaled quartic tensor is defined by 
\begin{equation}\label{quartic}
 \bar{ Q}_{L}^N= \frac{L^d}{N -1}\sum_{n=1}^{N} (\bar{\mathbb{A}}^{(n)}_L - 
 \frac{1}{N}\sum_{n'=1}^{N} \bar{\mathbb{A}}^{(n')}_L )^{\otimes 2},
\end{equation}
so that by the central limit theorem
\[
 \bar{ Q}_L:= \lim\limits_{N \to \infty} \bar{ Q}_{L}^N.
\]
The equivalent matrix representation of $\bar{ Q}_{L}^N$ 
is obtained by setting the operation $\otimes 2$  in (\ref{quartic}) as the Kronecker product of matrices
(see Definition \ref{def:Kron}), 
further denoted by 
\[
 \bar{{Q}}^N_L=[\bar{q}_{L,ij}] \in \mathbb{R}^{4\times 4},\quad 
i,j=1,\ldots,4.
\]
In our numerical tests we shall check the asymptotic behavior 
\[
 \left\langle \left| \frac{L^d}{N-1}\sum_{n=1}^{N} (\bar{\mathbb{A}}^{(n)}_L - 
 \frac{1}{N}\sum_{n'=1}^{N} \bar{\mathbb{A}}^{(n')}_L )^{\otimes 2} - 
 {Q}_{\mbox{\footnotesize hom}} \right|^2  \right\rangle_L  \lesssim L^{-d} \ln ^d L,
\]
which can be expected at the limit of large size $L$ of the RVE, see \cite{GlNeuOtto:13,GlOtto_Fluct:17}.
\begin{figure}[htb]
\centering
\includegraphics[width=7.2cm]{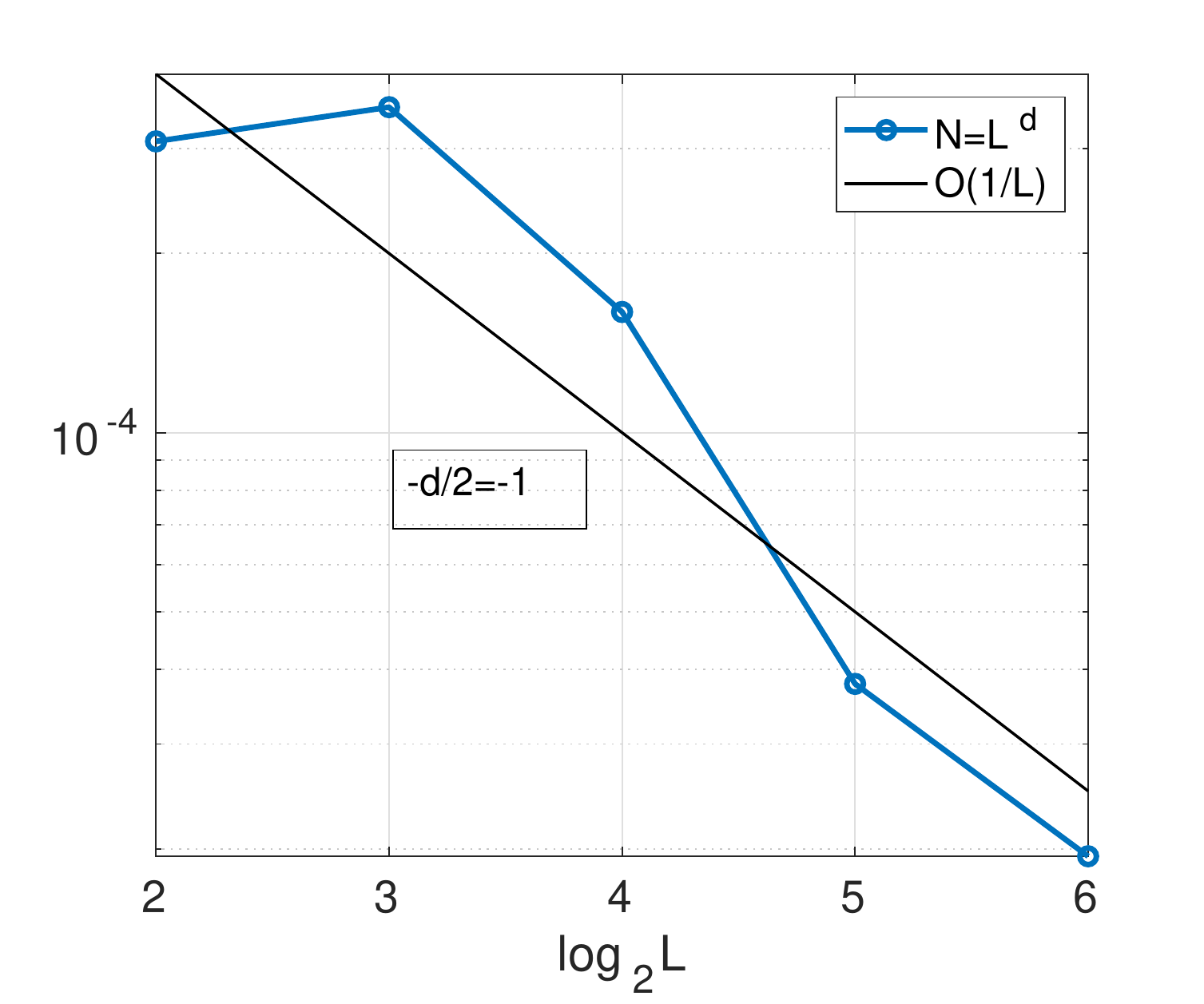}
\includegraphics[width=7.2cm]{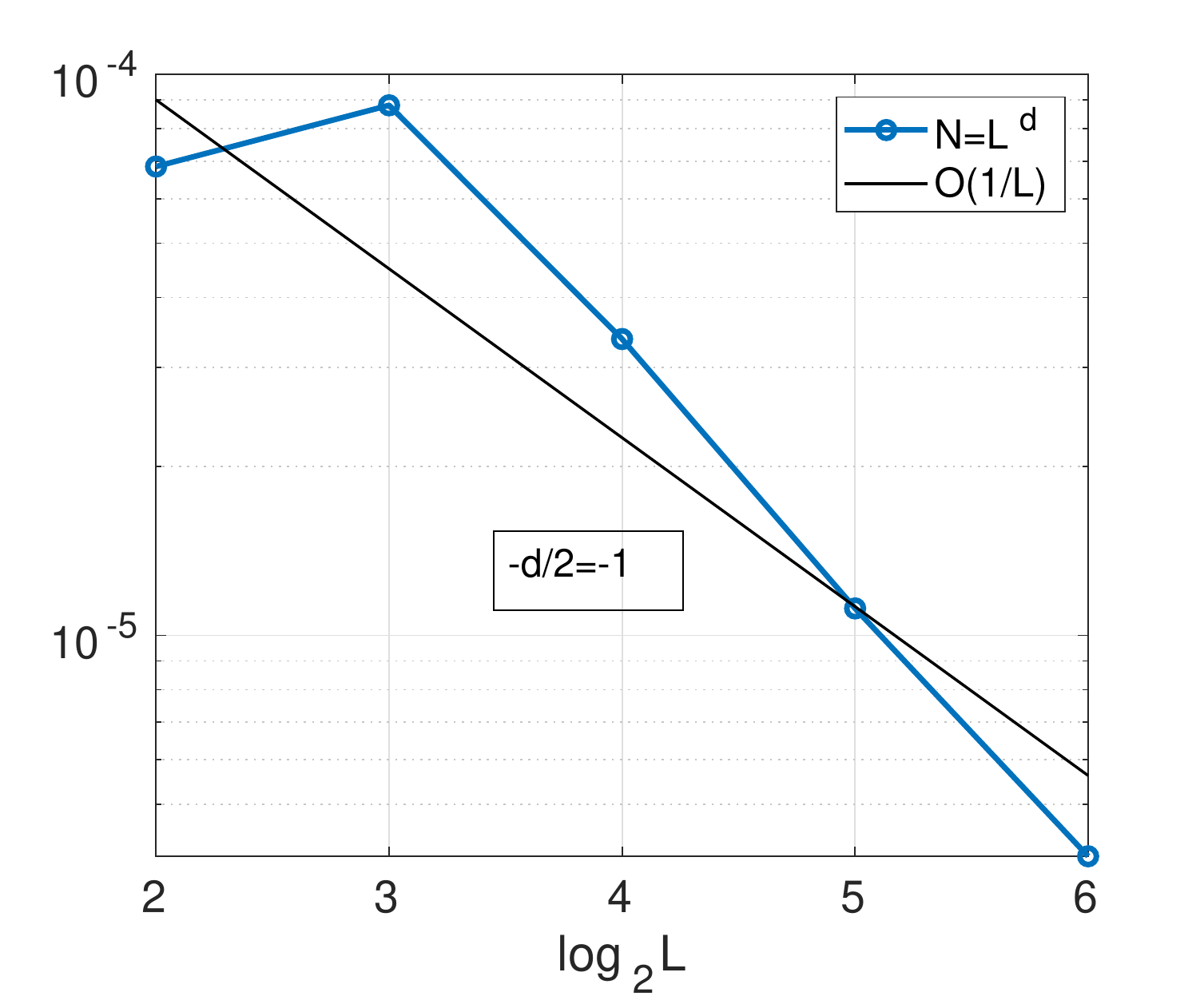}
\caption{\small 
$\bar{q}_{L,11}-\bar{q}_{2L,11}$  (left) and $\bar{q}_{L,14}-\bar{q}_{2L,14}$ (right) 
versus $L=2^p$, $p=1,\ldots, 6$, $N=L^d$.}
\label{fig:DevQuarticTensor_vsL}
\end{figure}
%

It is worth to note that the quartic tensor $\bar{ Q}_L$
can be calculated at no further cost than the effective homogenized matrix 
$\bar{\mathbb{A}}_{L}$.
%

Figure  \ref{fig:DevQuarticTensor_vsL} 
shows the diagonal elements,
$\bar{q}_{L,11} - \bar{q}_{2L,11}$ and $\bar{q}_{L,14} - \bar{q}_{2L,14}$, in
quartic tensor $\bar{ Q}_{L}^N$ versus increasing size of RVE $L=2^p$, $p=1,\ldots, 6$, 
(see (\ref{quartic})), for $N=L^d$.
Figure  \ref{fig:DevQuarticTensor_vsL} confirms convergence rate of 
$\bar{q}_{L,11} - \bar{q}_{2L,11}$ in RVE $L$ as  $O(L^{d/2}$.
 

\section{Conclusions}\label{sec:concl}

We present the numerical scheme for discretization and solution of 2D elliptic equations 
with strongly varying piecewise constant coefficients arising in stochastic homogenization
of multiscale composite materials.
The resulting large linear system of equations is solved by the preconditioned  
CG iteration  with  the convergence rate that is independent  of  the grid size and of  
the variation in jumping coefficients.
For a fixed size of the representative volume element, our approach allows to 
avoid the generation of the new FEM space in each stochastic realization.
For every realization, fast assembling of the FEM 
stiffness matrix is performed by agglomerating the Kronecker tensor 
products of 1D FEM discretization matrices. The
resultant stiffness matrix is maintained in a sparse matrix format.
%
 
 Our numerical scheme allows to investigate the asymptotic convergence rate 
of significant quantities of
stochastic homogenization process in the course of a large number of realizations 
(of the order of $N =10^5$) and for large sizes
of the representative volume elements up to $L=128$, corresponding to the 
number of inclusions $16384$ and matrix size $513^2\times 513^2$.   
Note that for every realization a new matrix generation and solution of the respective 
linear system is performed.
 
Our numerical experiments study the asymptotic convergence rate  
of systematic error and standard deviation 
in the size of RVE, rigorously established in \cite{GlNeuOtto:13}.
In particular, we confirm in various numerical tests the theoretical asymptotic estimates, 
see Section \ref{ssec:SystErStandDev}, concerning 
the convergence rate $O(1/L)$ for the empirical variance at the limit of large $L$, but with 
a moderate number of stochastic realizations $N$, and the asymptotic 
$C L^{-2}\ln^2 L$ in the case of large $N$.


The asymptotic behavior of covariances of the homogenized matrix in the form of quartic 
tensor are studied numerically.
In particular, we consider the asymptotic of the quartic tensor versus the leading 
order variances, computed for the large number of stochastic realizations up to $N=10^4$. 
In this way, the asymptotic $O(L^{-d} \ln^d L)$, for $d=2$, is confirmed on
a sequence of increasing sizes of the RVE, up to $L=64$.

The stochastic characteristics of the system are analyzed for a  
range of intrinsic model parameters like the number of realizations, 
the size of periodic representative volume element, the jump-ratio in the stochastic 
equation coefficients (contrast) and various grid discretization parameters.
The presented numerical scheme allows to perform large scale simulations using MATLAB 
on a moderate computer cluster.
The  tensor-based numerical techniques to matrix generation presented in this paper
can be extended to 3D and higher dimensional problems.

\section{Appendix: spectral density of a stochastic operator}\label{app:DOS_shomo} 

 Spectral properties of the randomly generated family of elliptic operators
 are important in many applications, in particular,
 in stochastic homogenization of time dependent PDEs. 
In what follows, we  analyze numerically the average behavior of the so-called density of spectrum (DOS) 
for the family of stochastically generated 2D elliptic operators $\{A_n\}$ for 
the large sequence of stochastic realizations $n=1,\ldots,N$. 
The DOS provides the important spectral characteristics of the stochastic differential operator
which accumulate the significant information on the static and dynamical characteristics of the complex 
physical or molecular system. Here we numerically demonstrate the convergence  of 
DOS to the sample average function at the limit of large number of stochastic realizations with fixed $L$.
Our second goal is the numerical study of the DOS depending on the increasing number $L$. 

\begin{figure}[htb]
\centering
\includegraphics[width=7.0cm]{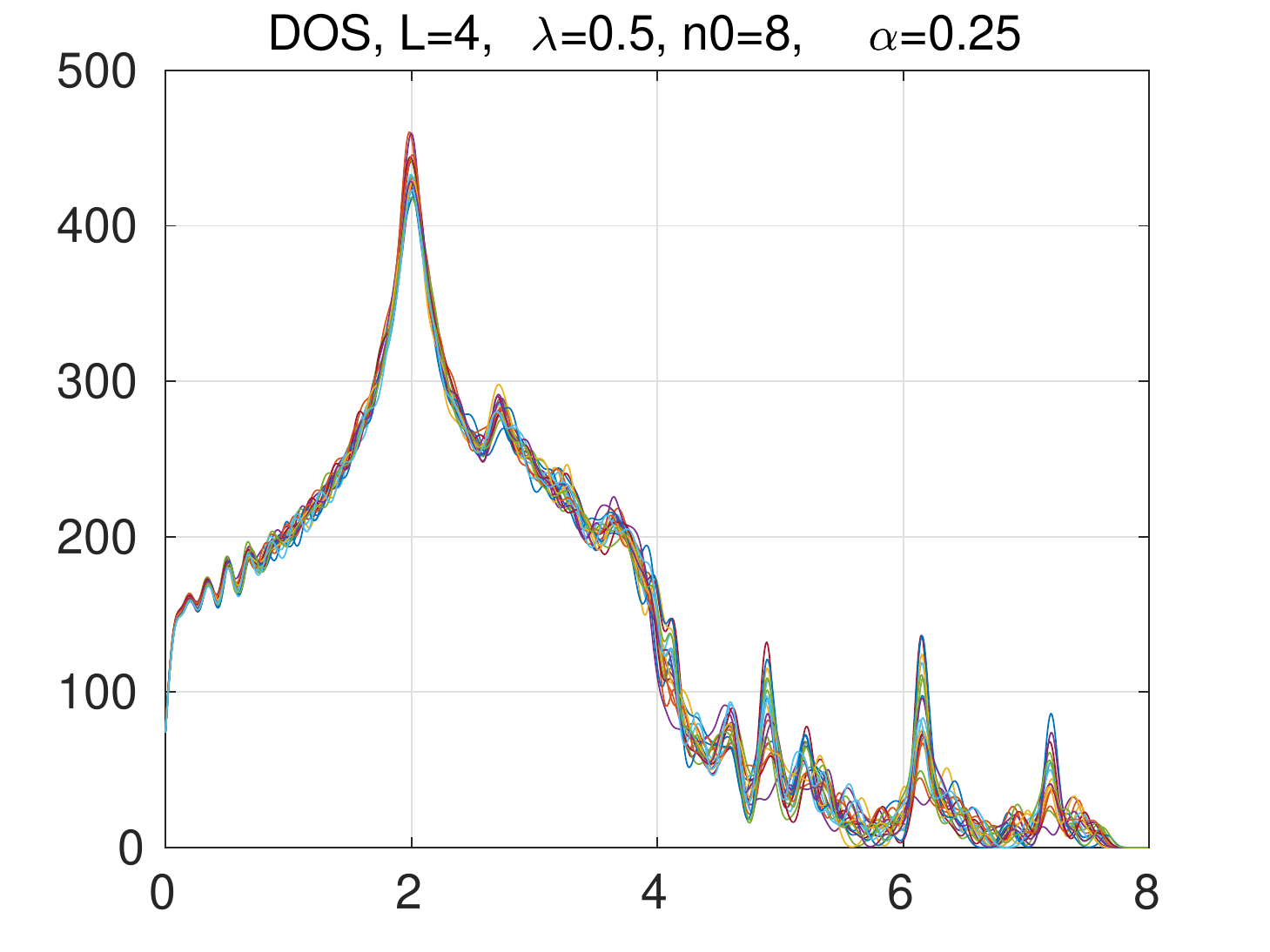}\quad
\includegraphics[width=7.0cm]{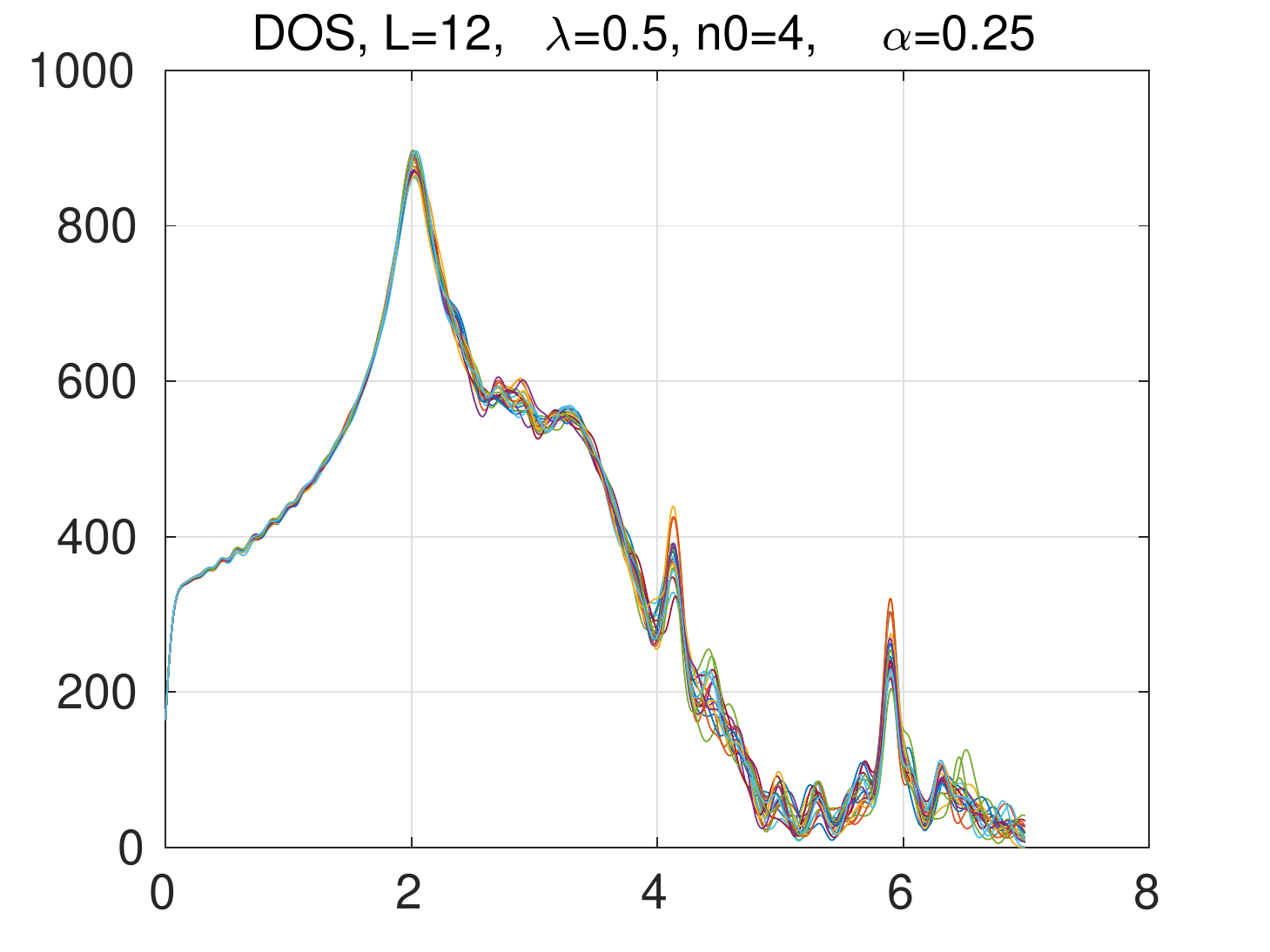}
\caption{\small Density of states for a number of stochastic processes $N=1, \;2, \ldots ,\; 20$  
with $L=4$ (left) and $L=12$ (right) for $\lambda =0.5$, $n_0=8$ and $\alpha = 0.25$.}
\label{fig:DOS_discr_L4L12}
\end{figure}  
We use the simple definition of DOS for symmetric matrices, see 
\cite{LinSaadYa:16,BeKhKhYa_AbsorpSp:16},
\begin{equation} \label{eqn:DOS}
 \phi(t)= \frac{1}{m}\sum\limits_{j=1}^{m} \delta(t-\lambda_j),\quad t,\lambda_j\in [0,a],
\end{equation}
where $\delta$ is the Dirac function and the $\lambda_j$'s are the eigenvalues 
of $A=A^T\in \mathbb{R}^{m\times m}$, 
\[
 A {\bf u}_j= \lambda_j {\bf u}_j, \quad j=1,\ldots,m,
\]
assumed to be labeled non-decreasingly. 

In the presented analysis we employ  the commonly used class \cite{LinSaadYa:16} 
of blurring approximations to the spectral density $\phi(t)$ 
by using regularization via a Gaussian function with width parameter $\eta>0$,
\[
\delta(t) \rightsquigarrow
 g_\eta(t)= \frac{1}{\sqrt{2 \pi}\eta}\exp{\left(-\frac{t^2}{2 \eta^2}\right)},
\]
where the choice of small regularization parameter $\eta$ depends on the particular problem setting.
The DOS in (\ref{eqn:DOS}) will be approximated by Gaussian broadening,
\begin{equation} \label{eqn:DOS_gauss}
 \phi(t)\mapsto \phi_\eta(t):= \frac{1}{m} \sum\limits_{j=1}^{m} g_\eta(t -\lambda_j),
 \quad t\in [\lambda_{min},\lambda_{max}].
\end{equation}

Figure \ref{fig:DOS_discr_L4L12} represents DOS for a sequence of $N=1, \;2, \ldots ,\; 20,$  
stochastic realizations with $L=4, 8$ from left to right, corresponding to the fixed model parameters
$\lambda =0.5$, $\alpha = 0.25$ and $m_0=8$.  
This figure demonstrates the clustering  of DOS around the sample average 
on a  sequence of $N$ realizations.
This can be compared with DOS corresponding to the homogenized coefficient.
The  numerical experiments show that the spectrum of the stochastic
operator  strongly depends on  the parameter  $\alpha $ of the stochastic process. Moreover,
we come to the following practically important observation.
\begin{remark}\label{rem:DOS_Per_L}
Figure \ref{fig:DOS_discr_L4L12}  indicates that
 the DOS calculated with fixed model parameters, but for different values of $L$ 
 have practically identical shapes in the case of periodic boundary conditions.
 This stochastic property is the reminiscence  of the corresponding feature for the deterministic
 lattice structured systems in the periodic super-cell.
\end{remark}
Finally, we notice that the specific feature of the homogenized DOS is that 
the sample average differs substantially from DOS for the operator with the 
homogenized coefficient.

\vspace{0.6cm}
   
{\bf Acknowledgements.}
The authors would like to thank Julian Fischer (IST Austria, Wien) and Ronald Kriemann 
(MPI MiS, Leipzig) for useful discussions concerning the problem setting.

\begin{footnotesize}

\end{footnotesize}

\end{document}